\newtheorem{theorem}{Theorem}
\newtheorem{prop}{Proposition}
\newtheorem{coro}{Corollary}
\newtheorem{fact}{Fact}
\theoremstyle{definition}
\newtheorem{example}{Example}
\newcommand{\ts}{\hspace{0.5pt}}
\newcommand{\BC}{\mathbb{C}\ts}
\newcommand{\BR}{\mathbb{R}\ts}
\newcommand{\BQ}{\mathbb{Q}\ts}
\newcommand{\BZ}{\mathbb{Z}}
\newcommand{\BN}{\mathbb{N}}
\newcommand{\gG}{\varGamma} 
\newcommand{\gL}{\varLambda}
\newcommand{\sph}{\mathbb{S}}
\newcommand{\CC}{\mathcal{C}}
\newcommand{\CO}{\mathcal{O}}
\newcommand{\CP}{\mathcal{P}}
\newcommand{\ii}{\mathrm{i}}
\DeclareMathOperator{\Sim}{sim }
\DeclareMathOperator{\mul}{MR}
\DeclareMathOperator{\N}{N}
\DeclareMathOperator{\PSL}{PSL}
\DeclareMathOperator{\den}{den}
\DeclareMathOperator{\SOS}{SOS}
\newcommand{\leg}[2]{\bigl(\!\frac{#1}{#2}\!\bigr)}
\begin{document}

\title[Similar sublattices]
{Similar sublattices of planar lattices}

\author{Michael Baake}
\address{Fakult\"at f\"ur Mathematik, Universit\"at Bielefeld, 
Box 100131, 33501 Bielefeld, Germany}
\email{$\{$mbaake,pzeiner$\}$@math.uni-bielefeld.de}
% \urladdr{http://www.math.uni-bielefeld.de/baake}

\author{Rudolf Scharlau}
\address{Fakult\"at f\"ur Mathematik, 
Universit\"at Dortmund, 44221 Dortmund, Germany}
\email{Rudolf.Scharlau@math.uni-dortmund.de}
% \urladdr{http://www.matha.mathematik.uni-dortmund.de/~scharlau/}

\author{Peter Zeiner}

\begin{abstract} 
  The similar sublattices of a planar lattice can be classified via
  its multiplier ring. The latter is the ring of rational integers in
  the generic case, and an order in an imaginary quadratic field
  otherwise. Several classes of examples are discussed, with special
  emphasis on concrete results. In particular, we derive Dirichlet
  series generating functions for the number of distinct similar
  sublattices of a given index, and relate them to various
  zeta functions of orders in imaginary quadratic fields.
\end{abstract}

\maketitle

% \centerline{\huge DRAFT}

\section{Introduction}

Lattices in $d$-space (by which we mean co-compact discrete subgroups
of $\BR^{d}$) are important objects with increasingly many
applications throughout mathematics and various applied sciences; see
\cite{CS} for a comprehensive study.  Among the sublattices of a
lattice $\gG \subset \BR^{d}$ are various interesting special classes,
such as similar sublattices (SSL) or coincidence site lattices (CSL);
see \cite{BM1, BM2, B} and references therein.
Their classification has important applications in crystallography,
materials science and coding theory, but is also interesting in its
own right. Here, we look at the special case of planar lattices and
derive a rather complete picture of their SSLs by using a suitable blend
of well-known results from quadratic forms, imaginary quadratic number
fields, complex multiplication and zeta functions. For known results
on the related case of planar $\BZ$-modules (in general
non-discrete), we refer to
\cite{PBR, BG, GB}.

The classification of similar sublattices is closely related to that
of coincidence sublattices, and analogously for modules, via the
underlying (generalised) symmetry groups \cite{GB, G, H, Z}. We will
thus use a formulation via the (orientation preserving) similarity
mappings of a lattice into itself, which form a ring in our
case. Beyond the planar situation, various results are known in $3$-
and $4$-space (via quaternions; see \cite{BM2, CRS, BHM, BHGZ, Z}).
General results are still sparse and restricted to rather
special cases; see \cite{CRS, H} and references therein.

In this article, we use complex numbers throughout, with \cite{Cox}
being one of our main references. For completeness and readability, we
give a brief account of the setting in Section~\ref{prelim}, followed
by a section on Dirichlet series generating functions in this
context. Section~\ref{ssl-pid} establishes the link between SSLs and
principal ideals, which is then explored in the remaining sections with
examples of increasing complexity.

\section{General setting and basic tools}\label{prelim}

Since we only consider planar lattices in this paper, we employ
complex numbers.  Two planar lattices $\gG\subset\BC$ and
$\gG{\ts}'\subset\BC$ are called (properly) \emph{similar} (or
\emph{complex homothetic}), written as $\gG \sim \gG{\ts}'$, when
$\gG{\ts}' = a\gG$ for some nonzero $a\in\BC$.  Similarity is an
equivalence relation, and we denote the equivalence class of a lattice
$\gG$ by $\Sim (\gG)$. More generally, one can (and should) also
consider orientation reversing similarities, then defining similar
lattices in the wider sense. In this paper, apart from some brief
comments, we restrict ourselves to orientation preserving mappings.

Each planar lattice can be written as the integer span of two nonzero
complex numbers $u,v$, denoted as $\gG=\langle u,v\rangle^{}_{\BZ}$,
where the ratio $v/u$ is a number in the open upper half-plane (and
thus not real).  This has an interesting and well-known consequence,
which follows from a multiplication by $1/u$.

\begin{fact} \label{fact-one}
  Each planar lattice is similar to a lattice of the form $\gG_\tau :=
  \langle 1,\tau\rangle_{\BZ}$, where $\tau$ is a complex number in
  the open upper half-plane $H:= \{z\in\BC\mid \mathrm{Im} (z)>0 \}$.
  \qed
\end{fact}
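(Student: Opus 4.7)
The plan is to convert the defining description of a planar lattice into the claimed normal form by a single scalar multiplication, after ensuring that the basis is chosen with the correct orientation.

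First I would start from the preceding observation: any planar lattice $\gG\subset\BC$ admits a $\BZ$-basis, i.e.\ can be written as $\gG=\langle u,v\rangle_{\BZ}$ with $u,v\in\BC\setminus\{0\}$. Since $\gG$ is a rank-$2$ discrete subgroup of $\BR^{2}$, the vectors $u$ and $v$ are $\BR$-linearly independent, which is equivalent to $v/u\notin\BR$. Hence $v/u$ lies either in the open upper half-plane $H$ or in the open lower half-plane.

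Next I would arrange the basis so that $v/u\in H$. If instead $\mathrm{Im}(v/u)<0$, one simply swaps the roles of $u$ and $v$: the pair $(v,u)$ is still a $\BZ$-basis of $\gG$, and the new ratio $u/v$ is the complex conjugate of $v/u$ divided by $|v/u|^{2}$, so it has positive imaginary part. Thus, after a possible swap, we may assume $\tau:=v/u\in H$.

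Finally, since $1/u\in\BC^{\times}$, the map $z\mapsto z/u$ is an orientation preserving similarity of $\BC$, and it sends $\gG$ to
\[
\tfrac{1}{u}\gG \;=\; \langle 1,\, v/u\rangle_{\BZ} \;=\; \langle 1,\tau\rangle_{\BZ} \;=\; \gG_{\tau}.
\]
By definition of the equivalence relation $\sim$, this proves $\gG\sim\gG_{\tau}$ with $\tau\in H$, as claimed.

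There is essentially no obstacle here; the only point that requires a moment of care is the orientation issue, i.e.\ justifying that one may always place $\tau$ in the upper half-plane rather than the lower one. Since the similarity relation in this paper is orientation preserving, one is not allowed to use complex conjugation, so the argument must proceed via swapping basis vectors (or replacing $v$ by $-v$, which has the same effect up to sign).
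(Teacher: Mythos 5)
Your proof is correct and takes essentially the same route as the paper: the paper also writes $\gG=\langle u,v\rangle_{\BZ}$ with the ratio $v/u$ placed in the open upper half-plane and then obtains $\gG_\tau$ by multiplying by $1/u$. Your only addition is to make explicit the orientation step (swapping $u$ and $v$, or negating one of them, when $\mathrm{Im}(v/u)<0$), which the paper leaves implicit in its choice of basis ordering.
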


One can further restrict $\tau$ to the region given by the conditions
$\lvert \tau\rvert\ge 1$, $\lvert\tau\pm 1\rvert\ge \lvert\tau\rvert$,
compare \cite[Fig.~2.1 and Thm.~2.2]{Apo-2}, which define a
fundamental domain for the action of the modular group $\PSL
(2,\BZ)$. In this sense, knowing the similar sublattices for all
lattices $\gG_\tau$ with $\tau$ in this region is sufficient to solve
the classification problem.

Given a planar lattice $\gG\subset\BC$, let us consider the set
\begin{equation} \label{def-mul}
    \mul (\gG) \, := \, \{ a\in\BC\mid a\gG\subset\gG\}\, ,
\end{equation}    
which will be the central object for the study of planar SSLs
below. Clearly, $\mul(\gG)$ is closed under addition and
multiplication and contains $1$, so it is a ring (a subring of
$\BC$). This ring is called the \emph{multiplier ring} of $\gG$. In
particular, it always contains $\BZ$ as a subring.  For the further
analysis of $\mul(\gG)$, we recall the following concepts from
elementary algebraic number theory (see \cite{BS,Marcus} for details).

\begin{fact} \label{alg-number}
 {}For a complex number $c$, the following properties are
   equivalent:
\begin{itemize}
\item[\rm (i)] There exists a finitely generated additive subgroup 
  $M$ of\/ $\BC$ with $c M \subset M$;
\item[\rm (ii)] The number $c$ is a root of a monic polynomial with 
  integer coefficients.
\end{itemize}
Such a number is called an \emph{algebraic integer}.
\qed
\end{fact}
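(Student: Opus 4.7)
The plan is to prove the two implications separately, both of which are classical. For (ii) $\Rightarrow$ (i), I would exhibit a concrete $M$: given a monic relation $c^n + a_{n-1}\ts c^{n-1} + \cdots + a_0 = 0$ with $a_i \in \BZ$, I set $M := \BZ + \BZ\ts c + \cdots + \BZ\ts c^{n-1}$. This is visibly finitely generated as an additive group, and $cM \subset M$ is immediate on the generators $1, c, \ldots, c^{n-2}$, while the monic relation expresses $c \cdot c^{n-1} = c^n$ as a $\BZ$-combination of the lower powers. This direction is routine and needs no further comment.

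For the converse (i) $\Rightarrow$ (ii), let $m_1, \ldots, m_n \in \BC$ generate $M$ additively, and observe that we may assume $M \neq \{0\}$ (otherwise the hypothesis $cM \subset M$ is vacuous and yields no information on $c$). Since $cM \subset M$, each $c\ts m_i$ is an integer combination of the $m_j$, so there is a matrix $A = (a_{ij}) \in \Mat(n, \BZ)$ with $c\ts m_i = \sum_j a_{ij}\ts m_j$ for all $i$. This is the identity $(cI - A)\,(m_1, \ldots, m_n)^{T} = 0$ in $\BC^{n}$.

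The key step is the classical determinant trick: multiplying on the left by the adjugate of $cI - A$, whose entries are polynomials in $c$ with integer coefficients, one obtains $\det(cI - A)\ts m_i = 0$ for every $i$. Because at least one $m_i$ is nonzero in $\BC$, this forces $\det(cI - A) = 0$, and $\chi_A(x) := \det(xI - A) \in \BZ[x]$ is monic of degree $n$. Hence $c$ is a root of a monic integer polynomial, as required.

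The only real obstacle is a conceptual one, namely handling the adjugate manipulation correctly: the $m_i$ are complex scalars, not elements of a module over an abstract ring, so the computation is genuinely a linear-algebra identity over $\BC$, and one should state this cleanly to avoid confusion with the more general Cayley--Hamilton form of the trick used for modules over commutative rings. Beyond that, no deep input from algebraic number theory is needed.
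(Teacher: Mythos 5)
Your proof is correct and takes essentially the same route as the paper: for (ii)~$\Rightarrow$~(i) the paper uses exactly your module $M=\BZ+\BZ\ts c+\cdots+\BZ\ts c^{n-1}$ closed under multiplication by $c$ via the monic relation, and for (i)~$\Rightarrow$~(ii) it invokes the same idea, namely a matrix representation of the map induced by $c$ on (the span of) $M$, whose characteristic polynomial your adjugate computation makes explicit. Your extra care about $M\neq\{0\}$ (needed, since $M=\{0\}$ satisfies (i) vacuously for every $c$) and about the determinant trick being plain linear algebra over $\BC$ simply fills in details that the paper's sketch leaves implicit.
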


{}For instance, the golden ratio $(\sqrt{5} + 1)/2$ is an algebraic
integer, since it is a root of $x^2-x-1$. Clearly, an algebraic
integer is algebraic over $\BQ$ (in the sense of field extensions).
Notice that it is not required, but is a consequence of (ii), that the
minimal polynomial of an algebraic integer has integral coefficients.
Notice also that the group $M$ in (i) need not be a lattice, though it
is isomorphic to $\BZ^n$ as a group, for some $n \in \BN$. Assuming
(i), the polynomial equation of (ii) can be obtained from a matrix
representation of the linear map induced by $c$ on the rational vector
space generated by $M$.  For the converse implication, one observes
that the subgroup $M$ generated by $1,c,c^2,\dots,c^{n-1}$, where $n$
is the degree of the assumed polynomial, is mapped into itself by $c$,
since $c\cdot c^{n-1} = -m^{}_{n-1}c^{n-1}-\dots-m^{}_{0}$ for
appropriate integers $m^{}_0,\dots,m^{}_{n-1}$.

A subring $\CO$ of $\BC$ is called an \emph{order} if it is finitely
generated as a group. All elements of an order are algebraic integers
(take $M=\CO$ in Fact~\ref{alg-number}). The quotient field $K$ of
$\CO$ then is a number field, meaning a finite extension of $\BQ$.
Usually, one starts with $K$ and speaks of an \emph{order in $K$}. The
set of all algebraic integers in a given number field $K$ is also an
order, the \emph{maximal order} of $K$, denoted by $\CO_K$.

\smallskip

Let us return to the discussion of the multiplier ring $\mul(\gG)$, as
defined in \eqref{def-mul}.  It is clear that all elements in this
ring are algebraic integers (take $M=\gG$ in
Fact~\ref{alg-number}). Two lattices which are similar have the same
multiplier ring, because the multiplication in $\BC$ is
commutative. By Fact~\ref{fact-one}, it is thus sufficient to restrict
to lattices of the shape $\gG_\tau$, with $\tau\in H$. A planar
lattice $\gG$ is called \emph{generic} when $\mul (\gG) = \BZ$, and
\emph{non-generic} otherwise.  The following determination of $\mul
(\gG)$ in the non-generic case (which is the one we are mainly
interested in) is well-known from the theory of elliptic functions;
for convenience of the reader, we recall the result in some detail,
since it is fundamental for everything that follows in this paper.

\begin{prop}\label{nongenmul}
  If $\gG$ is a non-generic planar lattice, its multiplier ring\/
  $\mul(\gG)$ is an order in an imaginary quadratic field. Explicitly,
  if $\gG \in \Sim \bigl( \langle 1, \tau \rangle^{}_\BZ \bigr)$ with
  $\tau \in \BC \setminus \BR $ is non-generic, the number $\tau$ is
  algebraic of degree $2$ over $\ts\BQ$, and one has
\[
    \mul ( \gG) = \langle 1, s \tau \rangle_\BZ
\]
for an appropriate integer $s$.
\end{prop}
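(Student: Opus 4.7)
The plan is to reduce to the case $\gG = \langle 1,\tau\rangle_\BZ$ (by Fact~\ref{fact-one}, combined with the observation that similar lattices share the same multiplier ring because multiplication in $\BC$ is commutative) and work in coordinates relative to the basis $\{1,\tau\}$. An element $a\in\BC$ lies in $\mul(\gG)$ precisely when both $a\cdot 1$ and $a\cdot\tau$ lie in $\gG$. The first condition already forces $a = m + n\tau$ for some $m,n\in\BZ$, so the whole problem reduces to analyzing when $a\tau\in\gG$ for such $a$.

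First I would extract the quadratic equation for $\tau$. Non-genericity supplies some $a = m + n\tau\in\mul(\gG)$ with $n\neq 0$ (otherwise $a\in\BZ$). Writing $a\tau = p + q\tau$ with $p,q\in\BZ$ and comparing with the direct expansion $a\tau = m\tau + n\tau^{2}$ produces the relation $n\tau^{2} + (m-q)\tau - p = 0$. After dividing by $n$, this is a monic rational quadratic satisfied by $\tau$, so $[\BQ(\tau):\BQ] = 2$, and $K := \BQ(\tau)$ is imaginary quadratic because $\tau\notin\BR$. At this point the inclusion $\mul(\gG)\subseteq\BZ + \BZ\tau\subset K$ is automatic.

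To pin down the ring, let $\tau^{2} + B\tau + C = 0$ be the minimal polynomial over $\BQ$, with $B,C\in\BQ$. A direct expansion for $a = m + n\tau$ gives $a\tau = -nC + (m - nB)\tau$, so $a\in\mul(\gG)$ if and only if $nB\in\BZ$ and $nC\in\BZ$ (while $m$ ranges freely over $\BZ$). Setting $s := \lcm\bigl(\den(B), \den(C)\bigr)$, the admissible $n$ form exactly $s\BZ$, yielding $\mul(\gG) = \langle 1, s\tau\rangle_\BZ$. Finite generation as an additive group inside $K$ then identifies this as an order in $K$.

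The main subtlety to watch is that $s$ is governed by the denominators in the minimal polynomial of $\tau$, not merely by the weaker condition that $s\tau$ be an algebraic integer; the two basis conditions $a\in\gG$ and $a\tau\in\gG$ must be kept cleanly separated when reading off the value of $s$. Everything else in the argument is routine $\BZ$-module bookkeeping in the basis $\{1,\tau\}$.
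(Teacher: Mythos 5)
Your proposal is correct and follows the same overall strategy as the paper: reduce to $\gG=\langle 1,\tau\rangle_\BZ$ via invariance of $\mul(\gG)$ under similarity, extract the quadratic equation for $\tau$ from a multiplier $a=m+n\tau$ with $n\neq 0$, and then determine $\mul(\gG)$ inside $\BZ+\BZ\tau$. The one place you diverge is the final step: the paper normalises to the primitive integral equation $s\tau^{2}+p\tau+q=0$ with $s>0$, $\gcd(s,p,q)=1$, and then \emph{cites} Lemma~1 of Borevich--Shafarevich for the conclusion $\mul(\gG)=\langle 1,s\tau\rangle_\BZ$, whereas you carry out that computation yourself: from the monic minimal polynomial $\tau^{2}+B\tau+C=0$ you get $a\tau=-nC+(m-nB)\tau$, so the admissible $n$ are exactly the multiples of $\lcm\bigl(\den(B),\den(C)\bigr)$. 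This is precisely the content of the cited lemma, and your $s$ agrees with the paper's: writing $B=p/s$, $C=q/s$ with $\gcd(s,p,q)=1$, one checks prime by prime that $\lcm\bigl(s/\gcd(p,s),\,s/\gcd(q,s)\bigr)=s$. So your argument is a self-contained version of the paper's proof, with no gap; the only cosmetic point worth keeping (which you do note) is that $\mul(\gG)$ is a ring a priori, so its identification as $\langle 1,s\tau\rangle_\BZ$, finitely generated with quotient field $\BQ(\tau)$, makes it an order in the imaginary quadratic field $\BQ(\tau)$.
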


\begin{proof}
  As $\mul(\gG)$ is the same for all elements of $\Sim(\gG)$,
  let $\gG = \langle 1, \tau \rangle_\BZ$ be non-generic
  and consider an element $a
  \in \mul(\gG) \setminus \BZ$, which exists by assumption. By
  Fact~\ref{alg-number}, $a$ is an algebraic integer. To expand on
  this, observe that $a = a\cdot 1 \in \gG$, so $a=u+v\tau$ for some
  $u,v \in \BZ$ with $v\neq 0$. Moreover, $a\cdot \tau = u\tau + v
  \tau^2 \in \gG$ implies $u \tau + v \tau^2 = k + \ell\tau$ for some
  $k,\ell \in \BZ$. This gives a quadratic equation $v \tau ^2 +
  (u-\ell)\tau - k = 0$ over $\BZ$ (and $\BQ$) for $\tau$,
  which is thus algebraic.

  Slightly changing the notation, there is then an equation
\[
   s\tau^2 + p \tau +q \, = \, 0 \, , \quad \text{with }
   s,p,q \in \BZ \ts ,\; s>0 \ts , \text{ and } \gcd(s,p,q)=1 \ts ,
\]
where $s,p,q$ are uniquely determined by $\tau$.  Lemma 1 in
\cite[Kap.~2, \S 7.4]{BS} (derived from similar, easy computations)
now shows that $\mul (\gG)$ is as claimed in the proposition. In
particular, it is itself a planar lattice, and thus an order in the
quadratic field $\BQ (\tau)$.
\end{proof}

If, in the above proof, one writes $\tau = \alpha + \ii \beta$ with
$\alpha,\beta\in\BR$ and $\beta>0$ (so that $\tau \in H$), the
non-genericity of $\gG_\tau$ leads to an explicit necessary and
sufficient criterion for $\alpha$ and $\beta$, which follows from a
straightforward calculation.

\begin{coro} \label{rational} 
  Consider $\gG_\tau$ with $\tau = \alpha + \ii \beta$, where
  $\alpha,\beta\in\BR$ and $\beta > 0$.  This lattice is non-generic
  if and only if both $\alpha$ and $\beta^{\ts 2}\!$ are rational
  numbers.  \qed
\end{coro}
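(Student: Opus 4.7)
The plan is to substitute $\tau = \alpha + \ii\beta$ directly into the quadratic equation provided by Proposition~\ref{nongenmul} and separate real and imaginary parts for the ``only if'' direction, then construct an explicit nontrivial multiplier for the converse. Since $\mul(\cdot)$ is similarity-invariant (as recorded in the text preceding Proposition~\ref{nongenmul}), Fact~\ref{fact-one} allows us to work with representatives of the form $\gG_\tau$ throughout.

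For the forward implication, assume $\gG_\tau$ is non-generic. Proposition~\ref{nongenmul}, through the intermediate equation in its proof, supplies integers $s > 0$, $p$, $q$ with $s\tau^2 + p\tau + q = 0$. Substituting $\tau = \alpha + \ii\beta$ and separating the imaginary part yields $(2s\alpha + p)\beta = 0$; since $\beta > 0$, this forces $\alpha = -p/(2s) \in \BQ$. The real part then expresses $\beta^{\ts 2}$ as a rational combination of $\alpha$, $\alpha^2$, and $1/s$, so $\beta^{\ts 2} \in \BQ$ as well.

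For the converse, assume $\alpha \in \BQ$ and $\beta^{\ts 2} \in \BQ$. A direct expansion shows that $\tau$ is a root of the rational polynomial $X^2 - 2\alpha X + (\alpha^2 + \beta^{\ts 2})$. Choosing any $s \in \BN$ that clears the denominators of both $2\alpha$ and $\alpha^2 + \beta^{\ts 2}$, the two products $s\tau \cdot 1 = s\tau$ and $s\tau \cdot \tau = 2s\alpha\ts\tau - s(\alpha^2 + \beta^{\ts 2})$ both lie in $\gG_\tau = \langle 1,\tau\rangle_\BZ$. As $s\tau \notin \BR$, this gives $s\tau \in \mul(\gG_\tau) \setminus \BZ$, so $\gG_\tau$ is non-generic.

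The argument boils down to one pair of real/imaginary computations in each direction and harbors no genuine obstacle. The only mildly delicate point is the explicit choice of the integer $s$ in the converse, which is needed to produce a witness in $\mul(\gG_\tau) \setminus \BZ$; but the rationality of $\alpha$ and $\beta^{\ts 2}$ makes such an $s$ obviously available.
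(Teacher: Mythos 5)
Your proof is correct and takes essentially the same approach the paper intends: substituting $\tau = \alpha + \ii\beta$ into the quadratic equation $s\tau^2 + p\tau + q = 0$ from the proof of Proposition~\ref{nongenmul} and separating real and imaginary parts for the forward direction, with the converse obtained by exhibiting $s\tau$ as an explicit nontrivial multiplier. The paper compresses both directions into the phrase ``straightforward calculation,'' and your write-up is precisely that calculation made explicit.
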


Let us briefly mention that $\tau=\frac{1}{3} + \ii \beta$ defines a
lattice $\gG$ with $3\overline{\gG}\subset\gG$, which shows the
possibility of sublattices that are similar to $\gG$ in the wider
sense.  More generally, for $\tau=\alpha + \ii \beta$, this happens if
and only if $2 m \ts \alpha + n \ts (\alpha^2 + \beta^2)$ is integer
for some $m,n\in\BZ$, not both $0$. This integrality condition is
always satisfied in the non-generic case. The existence of an
orientation reversing similarity for $\gG$ does not lead to new
sublattices precisely when the symmetry group of $\gG$ contains a
reflection.  We skip further details in this direction and concentrate
on proper similarities.

When a basis $B=\{b^{}_{1},b^{}_{2}\}$ for a planar lattice
$\gG\subset\BR^2$ is chosen, we denote by $G_B=(g_{ij})$ the corresponding
\emph{Gram matrix}, where $g_{ij}$ is the Euclidean inner product of
$b_i$ and $b_j$. A Gram matrix is called \emph{rational} when some
$0\neq \alpha\in\BR$ exists such that $\alpha\ts G_B$ has rational
entries only. Otherwise, it is called \emph{irrational}. The
rationality or irrationality of the Gram matrix (in this sense) is not
affected by the choice of the basis, and is shared by all lattices
similar to $\gG$.

\begin{coro} \label{Gram-one}
  Let $\gG$ be a planar lattice, with basis $B$ and associated
  Gram matrix $G_B$. The condition of Corollary~$\ref{rational}$ is
  then equivalent to $G_B$ being rational. This condition is
  independent of the actual choice of basis. 
  \qed
\end{coro}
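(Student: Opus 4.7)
The plan is to reduce the statement to the standard form $\gG_\tau = \langle 1,\tau\rangle_{\BZ}$ supplied by Fact~\ref{fact-one} and then verify everything by a short direct computation. Two elementary observations handle both the basis-independence and the reduction at once: (i)~any two bases $B, B'$ of $\gG$ satisfy $B' = M B$ for some $M \in \GL(2,\BZ)$, so $G_{B'} = M\ts G_B\ts M^{T}$, and multiplication by such an $M$ (or its integral inverse) preserves the property that some nonzero real scalar multiple of the Gram matrix has rational entries; (ii)~for any $a \in \BC\setminus\{0\}$ the set $aB$ is a basis of $a\gG$ with $G_{aB} = \lvert a \rvert^{2}\ts G_B$, so rationality of the Gram matrix is both basis- and similarity-invariant. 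It therefore suffices to verify the equivalence for $\gG = \gG_\tau$ with the distinguished basis $B = \{1, \tau\}$.

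For this basis, writing $\tau = \alpha + \ii\ts\beta$ with $\beta > 0$, a direct computation of Euclidean inner products gives
\[
   G_B \, = \, \begin{pmatrix} 1 & \alpha \\ \alpha & \alpha^{2} + \beta^{2} \end{pmatrix} .
\]
Since the $(1,1)$ entry equals $1$, any nonzero $c \in \BR$ for which $c\ts G_B$ has rational entries must itself lie in $\BQ^{\times}$. Hence $G_B$ is rational precisely when \emph{all} of its entries are rational, which is equivalent to $\alpha \in \BQ$ together with $\alpha^{2} + \beta^{2} \in \BQ$, and in turn (by subtracting $\alpha^{2}$) to $\alpha, \beta^{\ts 2} \in \BQ$~--- the condition of Corollary~\ref{rational}.

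There is no real obstacle here; the only point requiring momentary care is the scaling-constant freedom built into the definition of a rational Gram matrix, but normalising to the basis $\{1, \tau\}$ fixes the $(1,1)$ entry at $1$ and thereby pins any admissible rescaling down to $\BQ^{\times}$, after which the equivalence becomes a one-line verification.
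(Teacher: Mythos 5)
Your proof is correct and follows exactly the route the paper intends for this corollary, which it leaves as an immediate consequence of Corollary~\ref{rational}: reduce to the standard basis $\{1,\tau\}$ of $\gG_\tau$ via the $\GL(2,\BZ)$ basis-change invariance ($G_{B'} = M\ts G_B\ts M^{T}$) and the similarity invariance ($G_{aB} = \lvert a\rvert^{2} G_B$), then compute the Gram matrix directly, with the $(1,1)$ entry equal to $1$ pinning any admissible rescaling to $\BQ^{\times}$. Nothing is missing; the handling of the scaling freedom in the definition of rationality is exactly the one point that needed care, and you addressed it.
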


Closely related to the (properly) similar sublattices of a lattice
$\gG$ is the corresponding set of orientation preserving (linear)
similarity isometries, defined as
\begin{equation} \label{eq:def-SOS}
    \SOS (\gG) = \{ z\in\sph^1 \mid \alpha\ts z \gG \subset \gG
    \text{ for some $\alpha > 0$} \}\ts .
\end{equation}
It is immediate that $\SOS (\gG)$ is a subgroup of $\sph^1$. Its
elements are referred to as the \emph{special orthogonal similarities}
(SOS) of $\gG$, although, strictly speaking, we consider only the
rotational parts of the actual similarities here. Note that the latter 
only form a monoid, which was investigated in some detail in \cite{BM1};
see also \cite{GB,G} and references therein.

\begin{theorem}\label{sos-group}
  Let $\gG$ be a planar lattice. If it is generic, it has multiplier
  ring $\mul (\gG) = \BZ$ and $\SOS$-group $\SOS (\gG) = \{ \pm 1 \}
  \simeq C_2$. Otherwise, one has $\SOS (\gG) =
  \bigl\{ \frac{w}{\lvert w \rvert} \;\big|\; 0\ne w\in \CO \bigr\}$,
  where $\mul (\gG) = \CO = \mul (\CO)$ is an order in an imaginary
  quadratic number field $K$. Its explicit form follows from
  Proposition~$\ref{nongenmul}$.  Moreover, one has
\[
   \SOS (\gG) = \SOS (\CO) = \SOS (\CO_{K})
   = \bigl\{ \frac{w}{\lvert w \rvert} \;\big|\; 
     0\ne w\in \CO_{K} \bigr\} ,
\]
where $\CO_{K}$ is the maximal order of $K$
and contains $\CO$, and $\SOS (\gG)$ is constant on $\Sim (\gG)$.
\end{theorem}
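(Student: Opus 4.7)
The plan is to observe that $\SOS(\gG)$ is simply obtained from $\mul(\gG) \setminus \{0\}$ by normalising to the unit circle. Indeed, by \eqref{def-mul}, the inclusion $\alpha z \gG \subset \gG$ is equivalent to $\alpha z \in \mul(\gG)$, so
\[
  \SOS(\gG) \, = \, \bigl\{ z \in \sph^1 \;\big|\; \alpha z \in \mul(\gG) \text{ for some } \alpha > 0 \bigr\} \, = \, \bigl\{ w/\lvert w \rvert \;\big|\; 0 \neq w \in \mul(\gG) \bigr\} \ts ,
\]
where the second equality is obtained by setting $w := \alpha z$ in one direction and $\alpha := \lvert w \rvert$ in the other. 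This single identity carries almost the whole theorem: in the generic case $\mul(\gG) = \BZ$, it forces $w/\lvert w \rvert \in \{\pm 1\}$, giving $\SOS(\gG) = \{\pm 1\} \simeq C_2$.

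In the non-generic case, Proposition~\ref{nongenmul} identifies $\CO := \mul(\gG)$ as an order in an imaginary quadratic field $K$. The auxiliary identity $\mul(\CO) = \CO$ is then immediate: any $a \in \mul(\CO)$ satisfies $a = a \cdot 1 \in \CO$, and conversely closure of $\CO$ under multiplication gives $\CO \subset \mul(\CO)$. Applying the first paragraph with $\CO$ in place of $\gG$, which is legitimate since $\CO$ is itself a planar $\BZ$-lattice by Proposition~\ref{nongenmul}, yields $\SOS(\CO) = \{ w/\lvert w \rvert \mid 0 \neq w \in \CO \} = \SOS(\gG)$.

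The extension to $\CO_K$ is the one step that is not a pure tautology, and is where I expect the main (mild) obstacle: one uses that $\CO$ and $\CO_K$ are both full $\BZ$-lattices in $K$, so that the index $[\CO_K : \CO]$ is finite and some $n \in \BN$ satisfies $n \CO_K \subset \CO$. For any $0 \neq w \in \CO_K$ this yields $nw \in \CO \setminus \{0\}$ with $nw/\lvert nw \rvert = w/\lvert w \rvert$, and together with the trivial inclusion $\CO \subset \CO_K$ one obtains $\SOS(\CO) = \SOS(\CO_K) = \{ w/\lvert w \rvert \mid 0 \neq w \in \CO_K \}$. Finally, $\Sim$-invariance is forced by the commutativity of $\BC$: for $\gG{\ts}' = c \gG$ with $0 \neq c \in \BC$, the inclusion $\alpha z \gG{\ts}' \subset \gG{\ts}'$ reduces to $\alpha z \gG \subset \gG$, so $\SOS(\gG{\ts}') = \SOS(\gG)$, completing the proof.
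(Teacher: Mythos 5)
Your proposal is correct and follows essentially the same route as the paper: the identity $\SOS(\gG)=\bigl\{w/\lvert w\rvert \mid 0\ne w\in\mul(\gG)\bigr\}$, the observation that $\CO$ (and likewise $\CO_K$) is its own multiplier ring, and the finite-index argument $n\ts\CO_K\subset\CO$ to pass from $\CO$ to $\CO_K$ are exactly the ingredients of the paper's proof. You merely spell out the details (the normalisation $w=\alpha z$, $\alpha=\lvert w\rvert$, and the $\Sim$-invariance via commutativity) more explicitly than the paper does.
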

\begin{proof} In view of Proposition~\ref{nongenmul}, the claims
  follow from the observation that the SOS-group precisely consists of
  the directions $w/\lvert w\rvert$ with $w\ne 0$ in the multiplier
  ring of $\gG$, expressed as numbers on the unit circle. Clearly,
  $\CO$ is also its own multiplier ring, and every direction in $\CO$
  is a direction in $\CO_{K}$. On the other hand, $\CO$ has finite
  index in $\CO_{K}$, say $n$, so that $nz\in\CO$ for all
  $z\in\CO_{K}$, and the last claim follows.
\end{proof}

Let us mention in passing that $\SOS (\gG)$ remains unchanged for each
lattice that is commensurate with $\gG$ (meaning that there is a
common sublattice), but also for all elements of $\Sim(\gG)$ (and thus
for all lattices commensurate with any of the latter).  This is a
special feature of the planar situation (and trivially true for
$d=1$), but does not hold in higher dimensions, as one loses
commutativity of the special orthogonal group.

\begin{example}[{$\SOS (\BZ[\ii])$ and 
$\SOS (\BZ[\frac{1+\ii\sqrt{3}}{2}])$}]\label{ex:square-sos}
As $\BZ[\ii]$ is a principal ideal domain (and even Euclidean), its
arithmetic can be used to derive $G=\SOS (\BZ[\ii])$ explicitly. If
$z=\frac{w}{\lvert w\rvert}\in G$, then so is $z^2 = w^{2}/\lvert w
\rvert^2$. Using the unique prime decomposition \cite{HW} up to units
in $\BZ[\ii]$ together with the fact that $\lvert w \rvert^2 \in \BN$,
one finds
\[
      z^2 = \varepsilon \prod_{p\equiv 1\; (4)} 
      \Bigl( \frac{\,\omega^{}_{p}\,}{\overline{\omega^{}_{p}}} 
      \Bigr)^{n^{}_{p}} 
      = \varepsilon \prod_{p\equiv 1\; (4)} 
      \Bigl( \frac{\,\omega^{2}_{p}\,}{p} \Bigr)^{n^{}_{p}} ,
\]
where $\varepsilon = \ii^k$ with $k\in\{0,1,2,3\}$ is a unit in
$\BZ[\ii]$ and the product runs over the splitting primes of the
field extension $\BQ(\ii) / \BQ$. Here, all $n^{}_{p}\in\BZ$, at
most finitely many of them non-zero, and $p=\omega^{}_{p} \ts
\overline{\omega^{}_{p}}$ is the splitting of $p\equiv 1 \bmod 4$ into
two non-associate Gaussian primes; for details of this derivation, we
refer to \cite{PBR,B} and references therein.  Clearly, one then has
\[
    z = \Bigl( \frac{1+\ii}{\sqrt{2}} \Bigr)^{\ell} \prod_{p\equiv 1\; (4)} 
        \Bigl( \frac{\,\omega^{}_{p}\,}{\sqrt{p}} \Bigr)^{n^{}_{p}} 
\]
for some $\ell\in\{0,1,\ldots,7\}$ and the $n^{}_{p}\in\BZ$ with the
restrictions as above. Noting that $(1+\ii)/\sqrt{2}$ is a primitive
$8$th root of unity, one concludes $\SOS (\BZ[\ii]) \simeq C_{8}
\times \BZ^{(\aleph^{}_{0})}$. Explicit choices of the corresponding
generators can be read from the previous formula.

An analogous result holds for the triangular lattice, where the
$\SOS$-group is $C_{12}\times\BZ^{(\aleph^{}_{0})}$, with a primitive
$12$th root of unity as generator for the cyclic group $C_{12}$ and
$\omega^{}_{p}/\sqrt{p}$ with $p\equiv 1 \bmod 3$ as generators for
the infinite cyclic groups, where $\omega^{}_{p}$ is a (complex)
Eisenstein prime in the Euclidean ring $\BZ[\frac{1+\ii\sqrt{3}}{2}]$;
see \cite{HW} for background.

\end{example}

\section{Generating functions}

If $\gG$ is a planar lattice, we denote the number of distinct SSLs of
$\gG$ of index $m$ by $f(m)$. The integer-valued arithmetic function
$f$ is super-multiplicative, which means that one has
$f(mn)\ge f(m)\ts f(n)$ for coprime $m,n\in\BN$, see \cite{BHM} and
references therein for details. An example for genuine
super-multiplicativity is given by the rectangular lattice $\langle
1,\tau\rangle^{}_{\BZ}$ with $\tau=3\ts\ii/2$; further examples will
follow below.

In many interesting cases,
however, $f$ is a multiplicative function, which motivates the use of
Dirichlet series as their generating functions. We thus define
\begin{equation} \label{diri-one}
   D^{}_{\gG} (s) \, := \,
   \sum_{m=1}^{\infty} \frac{f(m)}{m^s} .
\end{equation}
As $[\gG : m\gG ] = m^2$, a lower bound for $f(m)$ is given by the
function that takes the value $1$ on all squares of $\BN$ and the
value $0$ otherwise. This lower bound gives the Dirichlet series of
the function $\zeta(2s)$, which converges absolutely for all $s$ with
$\mathrm{Re} (s) > \frac{1}{2}$. An upper bound is the number of
\emph{all} sublattices of $\gG$ of index $m$, which is given by the
divisor function $\sigma^{}_{1} (m) = \sum_{d|m} d$; see
\cite[Appendix]{B} or \cite[p.~99, Lemma~2]{S}.  
It defines the Dirichlet series of $\zeta(s)
\zeta(s-1)$, with absolute convergence for all $s$ with $\mathrm{Re}
(s) > 2$.  This implies that all Dirichlet series $D^{}_{\gG} (s)$ of
planar lattices converge absolutely at least in the open right
half-plane $\{ s\in \BC\mid \mathrm{Re} (s) > 2 \}$.

Recall from \cite{BHM} that a sublattice $\gL$ of $\gG$ is called
\emph{primitive} in $\gG$ when $x\gL\subset\gG$ with
$x\in\BQ$ implies $x\in\BZ$.  It is advantageous to distinguish SSLs
that are primitive from those that are not. In fact, each sublattice
of $\gG$ can uniquely be written as $k\gL$ with $k\in\BN$ and $\gL$ a
primitive sublattice. If we count the number of primitive SSLs of
$\gG$ by the function $f^{\sf pr} (m)$ and define $D^{\sf pr}_{\gG}
(s) := \sum_{m=1}^{\infty} \frac{f^{\sf pr}(m)}{m^s}$ in analogy to
\eqref{diri-one}, it is clear that one always has the relation
\begin{equation} \label{diri-two}
   D^{\vphantom{p}}_{\gG} (s) \, = \, 
       \zeta(2s)\, D^{\sf pr}_{\gG} (s) \ts .
\end{equation}
The determination of the generating function is thus reduced to
finding its primitive part,  the Dirichlet series $D^{\sf pr}_{\gG} (s)$.
\begin{fact}
  If $\gG$ is a planar lattice with generic multiplier ring,
  which is $\BZ$, one has $D^{\sf pr}_{\gG} (s) = 1$ and thus
  $D^{}_{\gG} (s) = \zeta(2s)$. 
  \qed
\end{fact}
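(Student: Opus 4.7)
The plan is to exploit directly the definition of the multiplier ring to enumerate all SSLs in the generic case. First, I would observe that any SSL of $\gG$ has the form $a\gG$ for some $a\in\BC\setminus\{0\}$ with $a\gG\subseteq\gG$, which by \eqref{def-mul} says exactly that $a\in\mul(\gG)$. In the generic case, $\mul(\gG)=\BZ$, so the SSLs of $\gG$ are precisely the sublattices $a\gG$ with $a\in\BZ\setminus\{0\}$. Since $a\gG=(-a)\gG$ as subsets of $\BC$, I may parametrise them by $a\in\BN$, and then the index is $[\gG:a\gG]=a^{2}$.

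Next, I would check primitivity. For $a\ge 2$, set $x=1/a\in\BQ\setminus\BZ$; then $x\cdot a\gG=\gG\subseteq\gG$, so $a\gG$ fails to be primitive in the sense recalled from \cite{BHM}. Only $a=1$, i.e.\ $\gG$ itself, is primitive. Consequently
\[
    f^{\sf pr}(m) \, = \, \begin{cases} 1, & m=1,\\ 0, & m\ge 2, \end{cases}
\]
and the Dirichlet series collapses to $D^{\sf pr}_{\gG}(s)=1$. Plugging this into the relation \eqref{diri-two} between $D^{}_{\gG}$ and its primitive part yields $D^{}_{\gG}(s)=\zeta(2s)\cdot 1=\zeta(2s)$, as claimed.

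There is really no obstacle of substance here; the statement is an immediate consequence of $\mul(\gG)=\BZ$ together with the defining properties of (primitive) SSLs and of the Dirichlet series \eqref{diri-one}. The only point one has to be a little careful about is that $a$ and $-a$ give the same sublattice, so the count is over $\BN$ and not $\BZ\setminus\{0\}$; this is what makes $f(m)$ equal to $1$ on squares rather than $2$, which is what is needed to recover $\zeta(2s)$ rather than $2\zeta(2s)$.
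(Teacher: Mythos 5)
Your proposal is correct and follows exactly the argument the paper leaves implicit (the Fact carries a \qed with no written proof): by \eqref{def-mul}, the SSLs in the generic case are precisely $a\gG$ with $a\in\BN$, of which only $\gG$ itself is primitive, so $D^{\sf pr}_{\gG}(s)=1$ and \eqref{diri-two} yields $D^{}_{\gG}(s)=\zeta(2s)$. Your attention to the identification $a\gG=(-a)\gG$, which makes the count exactly $1$ per square index rather than $2$, is the right point of care and is consistent with the paper's counting of \emph{distinct} sublattices.
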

In previous articles, the generating functions have been calculated
for a variety of examples in the plane (see \cite{BM1,BG} and
references therein) and in higher dimensions (compare
\cite{BM1,BM2,CRS,BHM}). Standard results such as Delange's Theorem
\cite[Thm.~II.15]{T} then yield the asymptotic growth of
$\sum_{m=1}^{n} f(m)$ for large $n$, which is one further benefit
of using generating functions.  It is now our aim to develop a
general approach for the calculation of the generating functions in
the planar case.

\section{Similar sublattices and principal ideals}\label{ssl-pid}

Let $\gG$ be a planar lattice with non-trivial multiplier ring $\mul
(\gG)$, which is thus an order $\CO$ in an imaginary quadratic field
$K$.  Note that $\CO$ itself is a planar lattice, and its own
multiplier ring, though it need not be similar to $\gG$ (we will see
examples for this below).  Nevertheless, the rotation symmetry group
of $\gG$ is canonically isomorphic with the unit group $\CO^\times$,
which is $C_2$, $C_4$ (when $\gG$ is similar to the standard square
lattice, $\gG\in\Sim(\BZ^2)$) or $C_6$ (when $\gG$ is similar to the
regular triangular lattice, $\gG\in\Sim (A_2)$).  Observe that the
linear mapping $z\mapsto a z$ in $\BC$ has determinant
$a\bar{a}$. Consequently, one has $[\gG : a \gG]= a \bar{a}$ for any
non-zero $a\in\CO$, by a standard argument involving areas of
fundamental domains.  In other words, $a\gG$ is an SSL of $\gG$ of index
$a\bar{a} = \N (a)$, where $\N$ denotes the field norm of $K$ and the
nontrivial Galois automorphism needed here is complex conjugation
$z\mapsto \bar{z}$.

\begin{prop}\label{prop:ideals}
  If $\gG$ is a planar lattice with multiplier ring\/ $\mul (\gG) = \CO
  \ne \BZ$, one has an index-preserving bijection between the SSLs of
  $\gG$ and the principal ideals of $\CO$. The Dirichlet series
  generating function for the number of SSLs of $\gG$ of a given index
  is thus given by the Dirichlet series for the non-zero principal
  ideals of $\CO$.
\end{prop}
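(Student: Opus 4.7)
The plan is to make the obvious map explicit: an SSL of $\gG$ is, by the definition of similar sublattice combined with \eqref{def-mul}, precisely a set of the form $a\gG$ with $0\ne a\in\mul(\gG)=\CO$, so I would send $a\gG \mapsto a\CO$. The whole proposition then reduces to checking that this map is a well-defined index-preserving bijection onto the non-zero principal ideals of $\CO$.

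First I would verify well-definedness and injectivity simultaneously by showing the key equivalence $a\gG=b\gG \iff a\CO=b\CO$. The direction from right to left is trivial once one notes that for any unit $u\in\CO^\times$ one has $u\gG=\gG$ (both $u\gG\subset\gG$ and $u^{-1}\gG\subset\gG$ hold by definition of $\CO=\mul(\gG)$). For the other direction, if $a\gG=b\gG$ then $(a/b)\gG=\gG$, so both $a/b$ and $b/a$ lie in $\mul(\gG)=\CO$, forcing $a/b\in\CO^\times$, hence $a\CO=b\CO$. This is the only step that really uses the hypothesis $\mul(\gG)=\CO$, and it is where I expect the main (modest) care to be needed: one must separate the roles of $\gG$ (on which similarity acts) and $\CO$ (which supplies the multipliers), and not confuse units of $\CO$ with units of some other ring.

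Surjectivity is automatic, since every non-zero principal ideal of $\CO$ has the form $a\CO$ with $a\in\CO\setminus\{0\}$, and then $a\gG$ is a bona fide SSL of $\gG$. For the index statement, the computation already recorded in the paragraph preceding the proposition gives $[\gG:a\gG]=a\bar{a}=\N(a)$. Applying the same computation to $\gG=\CO$ yields $[\CO:a\CO]=\N(a)$ as well, so the bijection $a\gG\leftrightarrow a\CO$ preserves the numerical invariant that will index the Dirichlet series.

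The Dirichlet series claim is then immediate: the number of SSLs of $\gG$ of index $m$ equals the number of non-zero principal ideals of $\CO$ of norm $m$, so the two generating series in \eqref{diri-one} coincide term by term. I would close the argument with one sentence pointing out that this reduces the computation of $D^{}_{\gG}(s)$ to a question about the principal-ideal zeta function of the order $\CO$, which is exactly the set-up the subsequent sections exploit.
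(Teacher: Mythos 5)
Your proof is correct and follows essentially the same route as the paper: the explicit correspondence $a\gG \leftrightarrow a\CO$, injectivity via the unit argument ($a\gG=b\gG$ forces $a/b\in\CO^{\times}$), and the index computation $[\gG:a\gG]=\N(a)=[\CO:a\CO]$ from the paragraph preceding the proposition. The only cosmetic difference is that the paper deduces $\varepsilon\gG=\gG$ for units from $\N(\varepsilon)=1$, while you use $\varepsilon^{-1}\in\CO$; both are fine.
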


\begin{proof}
  The lattice $\gG$ is similar to a lattice $\gG_{\tau}$ for some
  $\tau$ in the fundamental domain of the modular group, as discussed
  above.  By assumption and an application of
  Proposition~\ref{nongenmul}, $K=\BQ(\tau)$ is then an imaginary
  quadratic field, and the multiplier ring of both $\gG_{\tau}$ and
  $\gG$ is an order $\CO$ in $K$. Observe that $a\CO$ is a principal
  ideal of $\CO$ of index $\N(a)$.  Since $a\gG=b\gG$ for non-zero
  $a,b\in\CO$ implies $b^{-1}a\CO=\CO$, the number $b^{-1}a$ must be a
  unit in $\CO$. Conversely, any unit $\varepsilon \in\CO$ satisfies
  $\varepsilon\gG\subset\gG$.  Since $\N(\varepsilon)=1$, one actually
  has equality, which establishes the bijectivity as claimed.

The generating function then satisfies
\[
    D^{}_{\gG} (s) \, = \, \sum_{m=1}^{\infty}
    \frac{f (m)}{m^s} \, = \,
    \sum_{\substack{0\ts \neq \ts \mathfrak{a}\ts \subset\ts \CO \\
    \mathfrak{a} \text{ is principal}}}
    \frac{1}{\N(\mathfrak{a})^s} ,
\]
where $\mathfrak{a} = a\CO$ for some $a\in\CO$ when $\mathfrak{a}$ is
principal. Since $\N(\mathfrak{a}) = [\CO:\mathfrak{a}]= \N(a)$ in
this case, the second claim follows.
\end{proof}

For the remainder of the article, we will now use our approach to
treat concrete classes of examples, in increasing order of complexity.

\section{Orders of class number $1$} \label{CN1}

A particularly nice and simple situation emerges when the multiplier
ring $\CO$ of $\gG$ is a principal ideal domain (PID), or when at
least all proper ideals are principal (see below for more). In this
case, the Dirichlet series $D^{}_{\gG} (s)$ is just the zeta function
of $\CO$ itself, which is the Dirichlet series generating function for
\emph{all} non-zero ideals of $\CO$. To continue, it is easier to make
the distinction whether the order $\CO$ is maximal or not.

\subsection{Maximal orders}
Let $K$ be an imaginary quadratic field of class number $1$, with
discriminant $d_{K}$ (we follow the notation of \cite{Cox}), and let
$\CO=\CO_K$ be the maximal order of $K$, which is the ring of integers
in $K$ and a PID due to the assumption on the class number.  The following
result is classic, compare \cite[Thm.~7.30]{Cox}.
\begin{fact}\label{CNone}
  There are precisely $9$ imaginary quadratic fields with
  class number $1$, which means that their maximal orders
  are PIDs. These are the fields $K=\BQ(\omega^{}_{0})$ for
\[
   \omega^{}_{0} \, \in \, \big\{ \tfrac{1+i\sqrt{3}}{2}, i,
   \tfrac{1+i\sqrt{7}}{2}, i \sqrt{2}, \tfrac{1+i\sqrt{11}}{2},
   \tfrac{1+i\sqrt{19}}{2}, \tfrac{1+i\sqrt{43}}{2},
   \tfrac{1+i\sqrt{67}}{2}, \tfrac{1+i\sqrt{163}}{2} \big\}\, ,
\]
  which are fields of discriminant $d_{K}\in 
  \{-3,-4,-7,-8,-11,-19,-43,-67,-163\}$.
  In this formulation, the maximal order is $\CO_K =
  \BZ[\omega^{}_{0}]$, while 
  $\BQ(\omega^{}_{0}) = \BQ(\sqrt{d_{K}}\,)$. \qed
\end{fact}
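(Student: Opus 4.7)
The plan is to split the statement into two directions of very different difficulty: first, the straightforward verification that each of the nine listed fields actually has class number one (equivalently, that its maximal order is a PID); and second, the much deeper uniqueness assertion that no other imaginary quadratic field shares this property. Throughout I would use the standard equivalence from algebraic number theory that $\CO_K$ is a PID iff its ideal class group is trivial, i.e.\ $h(K)=1$, which lets me switch freely between the two formulations.

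For the forward direction, I would proceed field by field via the Minkowski bound $M_K=\frac{2}{\pi}\sqrt{\lvert d_K\rvert}$, which guarantees that every ideal class of $\CO_K$ contains an integral ideal of norm at most $M_K$. It then suffices to check, for each rational prime $p\le M_K$, that every prime ideal of $\CO_K$ above $p$ is principal. Using the splitting law in $K=\BQ(\sqrt{d_K}\,)$ via the Kronecker symbol $\leg{d_K}{p}$, primes with $\leg{d_K}{p}=-1$ are inert (hence $p\CO_K$ is already principal), while ramified or split primes are handled by exhibiting an explicit generator obtained from a solution of $\N(a)=p$ in $\CO_K$. For $d_K=-163$, for instance, the Minkowski bound is roughly $8.1$ and $2,3,5,7$ are all inert, so $h(K)=1$ follows at once; the remaining eight cases are similar but shorter. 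Equivalently, one could enumerate reduced binary quadratic forms of discriminant $d_K$ and verify that only the principal form occurs, which is the approach taken in \cite{Cox}.

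The converse, that no tenth such field exists, is the celebrated class number one problem, conjectured in essence by Gauss in the \emph{Disquisitiones Arithmeticae} and finally settled by Heegner in 1952, with gaps later clarified by Stark, and independently by Baker through his transcendence theory of linear forms in logarithms. This is the real content of the statement, and the main obstacle for any self-contained treatment. I would not attempt to reproduce it and would simply invoke \cite[Thm.~7.30]{Cox}. To finish, one matches the nine discriminants to the generators $\omega^{}_{0}$ listed in the statement via the elementary dictionary $\CO_K=\BZ[\sqrt{d}\,]$ when $d\equiv 2,3\pmod 4$ (so $d_K=4d$) and $\CO_K=\BZ\bigl[\frac{1+\sqrt{d}}{2}\bigr]$ when $d\equiv 1\pmod 4$ (so $d_K=d$), which reproduces the list verbatim.
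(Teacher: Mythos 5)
Your proposal is correct and ultimately takes the same route as the paper: the deep part --- that the list of nine fields is complete --- is in both cases delegated to \cite[Thm.~7.30]{Cox}, which is exactly the paper's own justification (the Fact is stated as ``classic'' with that citation and no further proof). Your Minkowski-bound verification that each listed field really has class number one, and the dictionary matching the discriminants $d_{K}$ to the generators $\omega^{}_{0}$, are correct supplements, but they do not change the essential reliance on the Heegner--Stark--Baker theorem for the uniqueness assertion.
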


The zeta function of $\CO_K$ is the Dedekind zeta function of the
quadratic field $K$. It is known \cite{Zag} to factorise as
\begin{equation} \label{quadratic-zeta}
   \zeta^{}_{K} (s) \, = \, \zeta(s) \, L(s,\chi) \, ,
\end{equation}
where $L(s,\chi)$ is the $L$-series of the nontrivial character
$\chi = \chi^{}_{d_{K}}$ of the field $K$.
The latter is a totally multiplicative arithmetic function and thus
given by $\chi^{}_{d_{K}} (1) =1$ together with its values on 
rational primes,
\[
    \chi^{}_{d_{K}} (p) = \begin{cases}
    0 \ts , & p \mid d_{K}\ts ,  \\[1mm]
    \bigl( \frac{d_{K}}{p}\bigr), & 2\ne p \nmid d_{K} \ts ,\\[1mm]
    \bigl( \frac{d_{K}}{2}\bigr), & p=2\nmid d_{K}\ts .
    \end{cases}
\]
Here, $\bigl(\frac{d_{K}}{p}\bigr)$ and $\bigl(\frac{d_{K}}{2}\bigr)$
denote the Legendre and the Kronecker symbol, the latter defined as
\[
    \left(\frac{d_{K}}{2}\right) = \begin{cases}
    1 \ts , & d_{K} \equiv 1 \; (8) \ts , \\
    -1\ts , & d_{K} \equiv 5 \; (8) \ts , \\
    0\ts , & d_{K} \equiv 0 \; (4)\ts .
    \end{cases}
\]
This permits a direct calculation of the zeta function via its Euler
product, as the character $\chi(p)$ takes only the values $0$, $-1$,
or $1$, depending on whether the rational prime $p$ ramifies, is
inert, or splits in the extension from $\BQ$ to $K$. The general
formula reads
\begin{equation}\label{quadratic-zeta-two}
      \zeta^{}_{K} (s) = \prod_{p\in\CP}  
      \frac{1}{(1-p^{-s})(1-\chi(p) p^{-s})} 
      \, = \!\!
      \prod_{\substack{p\in\CP \\ \chi(p)=0}} \! \frac{1}{1-p^{-s}} \!
      \prod_{\substack{p\in\CP \\ \chi(p)=-1}} \! \!\frac{1}{1-p^{-2s}} 
      \prod_{\substack{p\in\CP \\ \chi(p)=1}} \! \frac{1}{(1-p^{-s})^2} \ts ,
\end{equation}
where $\CP$ denotes the set of rational primes.

Let us recall that Eq.~\eqref{quadratic-zeta} implies the relation
\[
   f^{}_{K} (m) = \sum_{\ell | m} \chi^{}_{d_{K}} (\ell) 
\]
for the number of principal ideals of norm $m$ in $\CO_K$. This is
also the number of representations of $m$ by the norm form (counted
modulo the unit group of $\CO_K$), which can be proved by elementary
means as well; compare \cite[Thm.~8.3]{Zag}. Either way, one can now
calculate the contributions from primitive lattices by means of
Eq.~\eqref{diri-two}. An Euler factor that will show up repeatedly in
these zeta functions is
\begin{equation} \label{standard-factor}
   \frac{1+p^{-s}}{1-p^{-s}} \, = \, 
   1 + \frac{2}{p^s} + \frac{2}{p^{2s}}
   + \frac{2}{p^{3s}} + \ldots
\end{equation}  
The result on the generating functions now reads as follows.

\begin{prop}\label{prop:genfun}
  Let $K$ be any of the $9$ imaginary quadratic fields of
  Fact~$\ref{CNone}$, and let $p_{\sf ram}$ be its ramified prime,
  which is the unique rational prime that divides $d_{K}$. The
  Dirichlet series generating function for the number of SSLs of
  $\ts\CO_{K}$ is given by $D^{}_{\CO_{K}} (s) = \zeta^{}_{K} (s)$
  with the Dedekind zeta function of $K$ according to
  Eq.~\eqref{quadratic-zeta-two}.

Moreover, the generating function for the primitive
SSLs of $\ts\CO_{K}$ is
\[
     D^{\sf pr}_{\CO_{K}} (s) 
     = \frac{D^{}_{\CO_{K}} (s) }{\zeta(2s)}
     = (1+ p_{\sf ram}^{-s})
     \prod_{p\; \mathrm{splits}} 
     \frac{1+p^{-s}}{1-p^{-s}} \ts ,
\]
where the product runs over all rational primes $p$ that split in the
extension to $K$.  The same generating function also applies to any
planar lattice $\gG \in \Sim( \CO_K ) $.  \qed
\end{prop}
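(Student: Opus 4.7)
The plan is to combine Proposition~\ref{prop:ideals} with the Euler product in Eq.~\eqref{quadratic-zeta-two} and then divide out $\zeta(2s)$ to extract the primitive part. First, I would invoke Proposition~\ref{prop:ideals} to replace the counting of SSLs of $\CO_K$ by the counting of nonzero principal ideals of $\CO_K$, with the index equal to the ideal norm. Because $K$ has class number $1$ by Fact~\ref{CNone}, every nonzero ideal of $\CO_K$ is principal, so the relevant Dirichlet series is just the full ideal-counting Dirichlet series of $\CO_K$, which by definition is the Dedekind zeta function $\zeta^{}_K (s)$. This gives $D^{}_{\CO_K}(s)=\zeta^{}_K(s)$, and the factorisation \eqref{quadratic-zeta-two} supplies the explicit Euler product.

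For the primitive part, I would apply \eqref{diri-two} in the form $D^{\sf pr}_{\CO_K}(s)=D^{}_{\CO_K}(s)/\zeta(2s)$ and simplify the ratio prime by prime. At each rational prime $p$, the Euler factor of $\zeta(2s)$ is $(1-p^{-2s})^{-1}$, while that of $\zeta^{}_K(s)$ depends on the splitting behaviour encoded by $\chi^{}_{d_K}$. The three cases give: for ramified $p$ (i.e.\ $p=p_{\sf ram}$, since for each of the nine discriminants there is a unique prime dividing $d_K$), the ratio is $(1-p^{-2s})/(1-p^{-s})=1+p^{-s}$; for inert $p$, the zeta factor is already $(1-p^{-2s})^{-1}$, so the ratio is $1$; and for split $p$, the ratio is $(1-p^{-2s})/(1-p^{-s})^2=(1+p^{-s})/(1-p^{-s})$, recognising the standard factor of \eqref{standard-factor}. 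Assembling these yields exactly the claimed product formula.

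Finally, the statement about arbitrary $\gG\in\Sim(\CO_K)$ is immediate: by the remark preceding Fact~\ref{fact-one} and by Proposition~\ref{nongenmul}, similar lattices share the same multiplier ring, so the bijection of Proposition~\ref{prop:ideals} and the counting function $f$ are unchanged under similarity, and the generating function is a similarity invariant.

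I do not expect any real obstacle here; the only points requiring care are bookkeeping, namely verifying uniqueness of the ramified prime for each of the nine discriminants (direct inspection of the list $\{-3,-4,-7,-8,-11,-19,-43,-67,-163\}$, each of which is, up to sign, a prime power) and performing the three local simplifications correctly. The substance of the argument is packed into the two earlier results (Proposition~\ref{prop:ideals} and Fact~\ref{CNone}); the rest is a short Euler product computation.
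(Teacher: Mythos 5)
Your proposal is correct and takes essentially the same route as the paper, whose proof is packed into the discussion preceding the statement (hence the \qed): Proposition~\ref{prop:ideals} together with class number $1$ gives $D^{}_{\CO_K}(s)=\zeta^{}_K(s)$, and then Eq.~\eqref{diri-two} with the prime-by-prime division of the Euler product \eqref{quadratic-zeta-two} by that of $\zeta(2s)$ yields the primitive part, exactly as in your three local cases. The only trivial slip is a citation: the similarity-invariance of the multiplier ring is stated in the paragraph following Fact~\ref{alg-number}, not before Fact~\ref{fact-one}.
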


If we write $D^{\sf pr}_{\CO_{K}} (s) = \sum_{m=1}^{\infty} f^{\sf pr}
(m) \, m^{-s}$, the arithmetic function $f^{\sf pr}$ satisfies $f^{\sf
pr} (m) = 0$ for any $m\in\BN$ that is divisible by $p^{2}_{\sf ram}$
or by an inert prime. Otherwise, it takes the value $2^a$, where $a$
is the number of distinct splitting primes that divide $m$.

\begin{table}
{\large
\renewcommand{\arraystretch}{1.25}
\begin{tabular}{c|c||c|c||c|c}
$d_{K}$ &  norm form & 
$d_{K}$ &  norm form & 
$d_{K}$ & norm form  \\ \hline
$-3$   &  $x^2 + xy + y^2$ & 
$-8$   &  $x^2 + 2 y^2$ &
$-43$  &  $x^2 + xy + 11 y^2$ 
\\
$-4$   &  $x^2 + y^2$ & 
$-11$  &  $x^2 + xy + 3 y^2$ &
$-67$  &  $x^2 + xy + 17 y^2$
 \\
$-7$   &  $x^2 + xy + 2 y^2$ &
$-19$ &  $x^2 + xy +  5 y^2$ &
$-163$ & $x^2 + xy + 41 y^2$
\end{tabular}}
\bigskip
\caption{Norm forms for the $9$ maximal orders $\CO=\BZ
[\omega^{}_{0}]$ of class number $1$ in imaginary quadratic number
fields, labelled with the field discriminant $d_{K}$.}
\label{tab:max}
\end{table}

It remains to formulate a characterisation of the index spectrum and
the primitive index spectrum, meaning the integers $m$ for which
$f(m)\ne 0$ or $f^{\sf pr} (m)\ne 0$.  The result can be phrased by
means of the norm form of $\CO_{K}$, which is given in
Table~\ref{tab:max}.

\begin{coro}
  Let $\gG$ be a planar lattice with $\mul (\gG) = \CO_{K}$ for one of
  the $\ts 9$ imaginary quadratic fields $K$ of Fact~$\ref{CNone}$.
  Then, the indices of the SSLs of $\gG$ are precisely the positive
  integers that can be represented by the norm form of $K$, while
  those of the primitive SSLs are the subset of primitively
  representable integers. \qed
\end{coro}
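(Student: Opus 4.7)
My plan is to translate both claims into the language of ideals via Proposition~\ref{prop:ideals} and then exploit the fact that, since $K$ has class number $1$, \emph{every} non-zero ideal of $\CO_{K}$ is principal.

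For the unrestricted index spectrum, I would first apply Proposition~\ref{prop:ideals} to obtain an index-preserving bijection between the SSLs of $\gG$ and the principal ideals of $\CO_{K}$; by the class number assumption, these are just all non-zero ideals of $\CO_{K}$. An ideal of norm $m$ exists if and only if there is an element $a\in\CO_{K}$ with $\N(a)=m$. Writing $a = x + y\ts\omega^{}_{0}$ in the integral basis $\{1,\omega^{}_{0}\}$ of $\CO_{K}$, the norm $\N(a)=a\ts\bar{a}$ is precisely the binary quadratic form recorded for $d^{}_K$ in Table~\ref{tab:max}. Hence $m$ is the index of some SSL of $\gG$ if and only if $m$ is represented by the norm form of $K$.

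For the primitive case, I would first show that the SSL $a\gG$ is primitive in $\gG$ if and only if the ideal $a\ts\CO_{K}$ is primitive, meaning that it is not contained in $k\ts\CO_{K}$ for any rational integer $k>1$. Unfolding the definition from Section~3, $a\gG$ fails to be primitive exactly when $x\ts a\gG\subset\gG$ for some $x\in\BQ\setminus\BZ$; by the definition of the multiplier ring this is equivalent to $x\ts a\in\mul(\gG)=\CO_{K}$, and writing $x=1/k$ this becomes $a/k\in\CO_{K}$ for some integer $k>1$. Coordinatewise in the basis $\{1,\omega^{}_{0}\}$, this last condition is precisely $k\mid\gcd(x,y)$, so $a\gG$ is primitive iff $\gcd(x,y)=1$, which is exactly the condition that $m=\N(a)$ be primitively represented by the norm form.

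The main step needing care is the coordinate translation in the primitive case: one has to use that $\{1,\omega^{}_{0}\}$ is an \emph{integral} basis of $\CO_{K}$, so that $a/k\in\CO_{K}$ really forces both $k\mid x$ and $k\mid y$ in $\BZ$, and not merely some weaker divisibility after a change of basis. Once this bridge is in place, both assertions reduce to the same principle: via class number $1$, an (integer respectively primitive integer) $m$ is represented by the norm form if and only if $\CO_{K}$ possesses an element of norm $m$ (respectively an element of norm $m$ with coprime coordinates), which by Proposition~\ref{prop:ideals} corresponds to an SSL (respectively a primitive SSL) of $\gG$ of index $m$.
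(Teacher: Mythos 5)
Your proposal is correct and takes essentially the same route as the paper, which states this corollary without separate proof precisely because it is the immediate combination of Proposition~\ref{prop:ideals} (the index-preserving bijection between SSLs and principal ideals, with index equal to the norm) with the identification, in the integral basis $\{1,\omega^{}_{0}\}$, of element norms with values of the norm form, together with the coprimality reading of primitivity that you make explicit. One small point deserves a line of justification: your reduction of ``$x\in\BQ\setminus\BZ$ with $x a\in\CO_{K}$'' to ``$a/k\in\CO_{K}$ for some integer $k>1$'' is glossed over by ``writing $x=1/k$''; instead, write $x=p/q$ in lowest terms with $q>1$, note that $p a/q\in\CO_{K}$ means $q$ divides $p$ times each coordinate of $a$, and use $\gcd(p,q)=1$ to conclude that $q$ divides each coordinate of $a$, so that $a/q\in\CO_{K}$ as required.
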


\begin{example}[Square and triangular lattices]\label{ex:square-tri}
The square lattice $\BZ[i]$ and the triangular lattice
$\BZ[\frac{1+i\sqrt{3}}{2}]$ are the most prominent examples, and also
(up to similarity) the only ones with a larger point symmetry, as
mentioned above.  Since they have been analysed explicitly in various
other sources, see \cite{B,BM1,BG} and references therein, we omit
further details of the derivation and simply state the result. For any
lattice $\gG\in\Sim \bigl(\BZ[\ii] \bigr)$,
Proposition~\ref{prop:genfun} leads to the generating function
\begin{equation}\label{eq:squaregenfun}
     D^{\sf pr}_{\square} (s) = \sum_{m=1}^{\infty} 
    \frac{f^{\sf pr}_{\square}(m)}{m^{s}} = (1+2^{-s}) 
      \prod_{p\equiv 1 \; (4)}
      \frac{1+p^{-s}}{1-p^{-s}} \ts .
\end{equation}
Here, $f^{\sf pr}_{\square} (m) =0$ whenever $m$ is divisible by $4$
or by any prime $p\equiv 3 \bmod 4$, while one has $f^{\sf
pr}_{\square} (m) = 2^a$ otherwise, where $a$ is the number of
distinct primes $p\equiv 1 \bmod 4$ that divide $m$.

Similarly, for any $\gG\in \Sim \bigl(\BZ[\frac{1+i\sqrt{3}}{2}]
\bigr)$, one obtains
\begin{equation}\label{eq:trigenfun}
     D^{\sf pr}_{\triangle}  (s)  =  
       \sum_{m=1}^{\infty} \frac{f^{\sf pr}_{\triangle}(m)}{m^{s}}
      = (1+3^{-s}) \prod_{p\equiv 1 \; (3)} 
      \frac{1+p^{-s}}{1-p^{-s}} \ts .
\end{equation}
In variation of the previous case, one now has $f^{\sf pr}_{\triangle}
(m) =0$ for all $m$ that are divisible by $9$ or by any prime $p\equiv
2 \bmod 3$, and otherwise $f^{\sf pr}_{\triangle} (m) =2^a$, this time
with $a$ being the number of distinct primes $p\equiv 1 \bmod 3$ that
divide $m$.
\end{example}

\subsection{Non-maximal orders}
An application of the general class number formula for orders, see
\cite[Part~I, Thm.~7]{Sch123} or \cite[Thm.~7.24]{Cox}, shows that
there are precisely $4$ non-maximal orders of class number $1$ in
imaginary quadratic fields. Note, however, that a non-maximal order
$\CO$ fails to be Dedekind, hence is never a PID in the usual
sense. Here, the ideal class group only refers to the \emph{proper}
(or invertible) ideals, see \cite[\S 7]{Cox} for a nice summary. In
particular, all principal ideals are proper, wherefore we still have a
useful connection with the zeta function of $\CO$. The basic data for
our purposes are summarised in Table~\ref{tab:non-max}.

\begin{table}
{\large
\renewcommand{\arraystretch}{1.25}
\begin{tabular}{c|c|c|c|c|c}
$D$ & $K$ & $\CO$ & norm form & $p\ts | D$ & conductor \\ \hline
$-12$ & $\BQ(\ii\sqrt{3}\,)$ & $\BZ[\ii\sqrt{3}\,]$ & 
$x^2 + 3 y^2$ & $2,3$ & $2$ \\
$-16$ & $\BQ(\ii)$ & $\BZ[2\ts\ii]$ & $x^2 + 4 y^2$ & $2$ & $2$ \\
$-27$ & $\BQ(\ii\sqrt{3}\,)$ & $\BZ[\tfrac{1}{2}
                (1 + \ii \ts 3 \sqrt{3}\,)]$ & 
$x^2 + xy +  7 y^2$ & $3$ & $3$ \\
$-28$ & $\BQ(\ii\sqrt{7}\,)$ & $\BZ[\ii\sqrt{7}\,]$ & 
$x^2 + 7 y^2$ & $2,7$ & $2$
\end{tabular}}
\bigskip
\caption{Basic data for the $4$ non-maximal orders of class number $1$
in imaginary quadratic number fields, labelled with their discriminant $D$.} 
\label{tab:non-max}
\end{table}

In our present situation, it turns out that the generating function
for $\CO$ still possesses an Euler product over all primes.  This is
clear for all but finitely many primes, due to the bijection property
between ideals of $\CO$ and those of $\CO_{K}$ with norms coprime to
the conductor; see \cite[Prop.~7.20]{Cox}, and \cite[Ex.~8.8]{Zag} for
an explicit expression in terms of characters.  For the finitely many
remaining primes, namely the ones dividing the conductor, one has to
do some extra calculations, which then give the remaining Euler
factors constructively. This will be outlined in the explicit
treatment of the examples below, where we actually show this for all
primes that divide the discriminant.  As before, we focus on the
Dirichlet series for the primitive SSLs, because the others simply
follow by multiplication with $\zeta(2s)$, as in Eq.~\eqref{diri-two}.

\begin{example}[$D=-12$]\label{D=-12}
  The primes that need special attention are $p=2$ and $p=3$.  The
  quadratic form $x^2 + 3 y^2$ cannot represent $2$, while congruence
  arguments (mod $8$ and $9$) show that it cannot \emph{primitively}
  represent any integer that is divisible by $8$ or $9$. On the other
  hand, $3=0+3\ts (\pm 1)^2$ and $4=(\pm 1)^2 + 3\ts (\pm 1)^2$ are
  the only possibilities to represent $3$ and $4$, respectively.
  Counted modulo the unit group $\CO^{\times} \!\simeq C_2$, this
  amounts to a single solution for $m=3$ and to two solutions for
  $m=4$. All other primes can be extracted from the general
  formula~\eqref{quadratic-zeta}. The multiplicativity of the counting
  function (by the relation to $\CO_{K}$) is inherited for the
  combination of all primes except $p=2$. By another congruence
  argument (mod $4$), which in essence explores the different unit
  groups of $\CO$ and $\CO_{K}$, one sees that any primitive
  representation $x^2 + 3 y^2 = 4 m$ with $m$ odd can be split into
  one of $4$ and one of $m$, so that multiplicativity holds also for
  this prime factor.  Together, this results in the Dirichlet series
\[
     D_{\CO}^{\sf pr} (s) \, = \,
      \bigl( 1+ \frac{2}{4^s} \bigr) \bigl( 1 + \frac{1}{3^s} \bigr)
      \prod_{p\ts\equiv 1\; (3)} \frac{1+p^{-s}}{1-p^{-s}} .
\]     
\end{example} 

\begin{example}[$D=-16$]\label{D=-16}
  Here, the only special prime is $p=2$. When $m=x^2 + 4 y^2$ is
  divisible by $16$, congruence arguments mod $4$ and $16$ show that
  $x$ and $y$ cannot be coprime, so that no primitive solutions are
  possible then. As $2$ is not representable at all, it remains to
  count the solutions for $m=4$ and $m=8$, where one observes $4 = 0^2
  + 4\ts (\pm 1)^2$ and $8 = (\pm 2)^2 + 4\ts (\pm 1)^2$, which (again
  mod the unit group $\CO^{\times}\!\simeq C_{2}$) amounts to $1$
  resp.\ $2$ solutions. As in the previous example, the
  multiplicativity of the counting function needs to be extended,
  here to cover powers of $p=2$. It follows from a congruence
  argument mod $8$ resp.\ mod $16$. Together with the standard Euler
  factor~\eqref{standard-factor} for all other primes, one thus has
  the Dirichlet series
\[
    D_{\CO}^{\sf pr} (s) \, = \,
    \bigl( 1+ \frac{1}{4^s} + \frac{2}{8^s} \bigr) 
    \prod_{p\ts\equiv 1\; (4)} \frac{1+p^{-s}}{1-p^{-s}}.
\]

\end{example} 

\begin{example}[$D=-27$]\label{D=-27}
  Here, the kind of reasoning of the previous example has to be
  repeated for the prime $p=3$, though for a slightly more complicated
  quadratic form.  One can check that $3$ is not representable by $x^2
  + xy + 7 y^2$, while $9=1^2 + 1\cdot 1 + 7\cdot 1^2= 2^2 - 2\cdot 1
  + 7\ts (-1)^2$ and $27=4^2 + 4\cdot 1 + 7\cdot 1^2 = 1^2 - 1\cdot 2
  + 7\ts (-2)^2 = 5^2 - 5\cdot 1 + 7\ts (-1)^2$ provide a complete
  list of representatives (mod units) for the primitive
  representations of $9$ and $27$. To see that no primitive
  representation of integers of the form $81 m$ with $m\in\BZ$ exist,
  one first observes $x^2 + xy + 7 y^2 = (x+\tfrac{1}{2}\ts y)^2 +
  \tfrac{27}{4} \ts y^2$, and concludes via congruence considerations
  mod $81$.

  Moreover, a refined congruence argument (mod $27$) also shows that,
  as in the previous two examples, we get an extension of
  multiplicativity to cover contributions from powers of $p=3$.
  Invoking the standard Euler factor once more for all other primes,
  one gets
\[
    D_{\CO}^{\sf pr} (s) \, = \,
    \bigl( 1+ \frac{2}{9^s} + \frac{3}{27^s} \bigr) 
    \prod_{p\ts\equiv 1\; (3)} \frac{1+p^{-s}}{1-p^{-s}}.
\]
\
\end{example} 

\begin{example}[$D=-28$]\label{D=-28}
  In the last example of this paragraph, the primes $p=2$ and $p=7$
  need special attention, this time for the quadratic form $x^2 + 7
  y^2$.  Clearly, there is only one way (mod units) to represent $7$,
  and no primitive way to represent any integer that is divisible by
  $49$, which follows once more by a congruence argument (here,
  mod $49$).

  {}For the positive powers of the prime $2$, one quickly finds that
  $2$ and $4$ are not representable at all. The primitive
  representations of the higher powers of $2$ can be derived from the
  factorisation $2 = \pi \bar{\pi}$ with $\pi = (1+\ii\sqrt{7}\,)/2$,
  where $\pi$ is a prime in the maximal order (which is $\CO_K =
  \BZ[\pi]$), but not an element of $\CO$. Observe next that the only
  ideals of $\CO_K$ of norm $2^r$ are the principal ideals generated
  by $\pi^{\ell} \bar{\pi}^{r-\ell}$ for $0\le\ell\le r$. We need to
  select the generating elements that also lie in $\CO$ and are
  primitive there.  It is not difficult to check that this requires
  $r\ge 3$ together with either $\ell=1$ or $r-\ell = 1$.  These two
  cases are not related by units, so that always precisely two
  primitive representations (up to units) exist for $r\ge 3$.
  
  Now, one needs the identity
\[
     1 + \sum_{m\ge 3}\frac{2}{2^{ms}} \, = \,
     1 + \frac{2}{8^s} \,\frac{1}{1-2^{-s}} \, = \,
     (1-2^{1-s} + 2^{1-2s} )\,\frac{1+2^{-s}}{1-2^{-s}} \ts ,
\]
while all remaining primes work as in the previous examples. 
Here, multiplicativity of the counting function is once again
clear for all primes except $p=2$. For the latter, we observe
that an integer in $\CO_{K}$ with odd norm is automatically an
element of $\CO$, so that we can factorise any represented
integer into powers of $2$ and its odd part. Together, this yields
\[
    D_{\CO}^{\sf pr} (s) \, = \,
      \bigl( 1- \frac{2}{2^s} + \frac{2}{4^s} \bigr) 
      \bigl( 1 + \frac{1}{7^s} \bigr)
      \prod_{p\ts\equiv 1,2,4 \; (7)} \frac{1+p^{-s}}{1-p^{-s}} \ts .
\]

\end{example}

\section{Euler's convenient numbers}

Similar results can be obtained for a larger, though still
finite, list of discriminants. These are the numbers $D$ such that
every \emph{genus} of (positive definite, binary) quadratic forms of
discriminant $D$ consists of one class only. The crucial property of
such single class genera is that, for the corresponding forms, it only
depends on a congruence condition modulo $D$ whether a natural number
is represented by the form or not.  By definition, two quadratic forms
(in any number of variables) are in the same genus if they are
equivalent modulo $N$ for every modulus $N \in \BN$. In this case, the
forms have the same discriminant, and the number of classes in one
genus is thus always finite.

Here, we deal with \emph{binary} quadratic forms where the theory of
genera has several special features (and is, in fact, a well
established part of classical algebraic number theory, independent of
the general theory of quadratic forms; compare \cite{Buell,Cox,Zag}).
As before, the distinction between fundamental and non-fundamental
discriminants is relevant. For a given \emph{fundamental} discriminant
$D$, the equivalence classes of quadratic forms bijectively correspond
to the ideal classes in the maximal order $\CO_D$.  For a quick
description of the partition of classes into genera, one can take
advantage of the group structure on the set $\CC_D$ of ideal classes
of $\CO_D$:\ two ideal classes are in the same genus if they give the
same element in the factor group $\CC_D^{}/\CC_D^2$. All genera are of
the single class type if and only if $\CC_D^2$ is the trivial group,
which is tantamount to saying that the class group is a finite Abelian
$2$-group.  For \emph{non-fundamental} discriminants, there are
certain complications to this approach (which works only for
invertible ideals).  We therefore briefly summarise the main facts in
a different way, which is more suitable for our purposes.

The different genera of binary forms $q$ of some fixed discriminant
$D$ are separated by the values $m=q(x,y)$ represented by the
form. Together with an individual $m\in\BZ$ coprime to $D$, also its
whole square class in $(\BZ/D\BZ)^{\times}$ is represented by the
genus. Already one square class represented by $q$ determines the
genus of $q$.  This square class, in turn, is determined by the values
of all quadratic (or `real') characters $\chi \!:\,
(\BZ/D\BZ)^{\times} \longrightarrow \{\pm 1\}$.  Let us mention in
passing that precisely half of the elements of $(\BZ / D\BZ)^{\times}$
are represented by some form of discriminant $D$. These are the
elements of the kernel of a certain `principal' character
$\chi^{}_{\!\ts D}$; compare \cite{Buell, Cox, Zag}.

Following our earlier discussion, we are primarily interested in the
principal genus, which contains the norm form of the order $\CO_D$.
The elements of $(\BZ/D\BZ)^{\times}$ represented by this genus form
a subgroup of $(\BZ/D\BZ)^{\times}$ that contains the group of all
squares as another subgroup of index at most $2$;
compare \cite[Lemma~3.17]{Loub}.

\smallskip

Let $h$ be the class number of $\CO_{D}$.
Using the previously mentioned general formula
\[
     R^{}_{\! D} (m)\, := \sum_{i=1}^h R^{}_{q_i}(m) 
     \,=  \sum_{k | m}\chi^{}_{\!\ts D} (k)
\]
for the total (weighted) representation number of a number $m$ by all
forms $q_i$ of discriminant $D$, one can derive explicit results also
in the present case, where $h>1$, but all $h$ forms $q_i$ lie in
different genera. Our previous discussion implies that the supports of
the various $R_{q_i}$ in $(\BZ / D\BZ)^{\times}$ are disjoint and
cover the kernel of $\chi^{}_{\!\ts D}$. Notice that all
representations are counted in this formula, not just the primitive
ones.

\medskip 

The list of the known discriminants of positive definite
binary single class genera is given (without further explanation) in
\cite[Sec.~5.2]{Buell}. Among the discriminants $\equiv 0$ mod $4$,
there are presently $65$ such numbers known, which were already
studied by Gau{\ss} and Euler; see \cite[Sequence~A\ts
000926]{online}.   These numbers are also given in a Table on p.~60 of
\cite{Cox}, sorted according to the class number, which also goes back
to Gau{\ss}. Among the remaining discriminants, namely those $\equiv
1$ mod $4$, further $36$ cases are known \cite[Sec.~5.2]{Buell}. As
before, they contain both fundamental and non-fundamental ones, and
the figures contain the cases of our Tables \ref{tab:max} and
\ref{tab:non-max}.

The total list is believed to be complete, and it has been a long
standing challenge of the `analytic theory of algebraic numbers' to
actually prove this. For a first general approach and a
non-constructive finiteness result (naturally not for today's state of
matters), see the classic lecture notes by Siegel
\cite[Thm.~25.5]{Siegel}. The known list of fundamental discriminants
is complete if the generalised Riemann hypothesis is true
\cite{Loub}. By \cite{Wein}, there is at most one further fundamental
discriminant with only one class in each genus.  The case of arbitrary
discriminants can be reduced relatively easily to the case of
fundamental discriminants, for instance by the method explained in
\cite[Sec.~7.1]{Buell}, or by using the relative class number formula,
as explained in \cite{Sch123}, see also \cite[Excs.~7.3]{Cox}.

\bigskip
When the class number fails to be $1$, we will generally lose
multiplicativity of the counting function $f$. This relates to the
fact that the product of two non-principal ideals in the
corresponding order is principal. However, due to the structure
of the ideal class group, we have a natural binary grading on the
ideals, depending on whether they are principal or not. If the
order under investigation is still principal, one can derive
the generating function quickly from the zeta function.

\begin{example}[${\BZ [\ii\sqrt{6}\,]}$]
The discriminant is $D=-24$, which is fundamental, with
class number $2$, hence ideal class group $C_{2}$. The norm form
is $x^2 + 6 y^2$, which is the norm of principal ideals
in the maximal order $\CO$, while the non-principal ideals
have a norm of the form $2 x^2 + 3 y^2$. The relevant, totally
multiplicative character $\chi^{}_{-24}$ is defined by
\[
   \chi^{}_{-24} (p) = \begin{cases}
   0 ,& \text{if $p=2$ or $p=3$,} \\
   1 ,& \text{if $p\equiv 1,5,7,11$ mod $24$,} \\
  -1 ,& \text{if $p\equiv 13,17,19,23$ mod $24$,}
   \end{cases} 
\]
which leads to the zeta function $\zeta^{}_{K} (s) = \zeta(s) \,
L(s,\chi^{}_{-24})$; compare \cite{Zag}.  Extracting the contribution
from primitive ideals then gives the factorisation
\[
   \zeta^{}_{K} (s) = \zeta (2s)\,
   \biggl( (1+2^{-s})(1+3^{-s})\!\!
   \prod_{p\equiv 5,11\; (24)}
   \frac{1+p^{-s}}{1-p^{-s}} \biggr)
   \prod_{p\equiv 1,7 \; (24)}
   \frac{1+p^{-s}}{1-p^{-s}}.
\]
The bracketed term contains the contributions from primitive
ideals that are themselves not principal, while the last
product covers the principal ones. Our Dirichlet series
thus reads
\[
   D^{\textsf{pr}} (s) = \prod_{p\equiv 1,7 \; (24)}
   \frac{1+p^{-s}}{1-p^{-s}} \;
   \sum_{m=1}^{\infty} \frac{ b(m)} {m^s}
\]
with $b(1)=1$ and $b(m)=0$ whenever $p|m$ for some
$p\equiv 1,7,13,17,19,23$ mod $24$. What remains are the
integers of the form $m=2^{\alpha}\ts 3^{\beta} 
\prod_{p\equiv 5,11 \; (24)} p^{\ell_p}$ with
$\alpha, \beta\in \{0,1\}$ and $\ell_p \in \BN_0$,
only finitely many of them $\ne 0$. For them, the
grading implies
\[
   b(m) = \bigl(1 + (-1)^{\alpha + \beta + \sum \ell_p} 
    \bigr)^{\mathrm{card} \{ p > 3 \ts\mid\ts \ell_p \ne 0 \}},
\]
which, together with the contribution from primes
$\equiv 7$ mod $24$, results in
\[
   D^{\textsf{pr}}(s) =
   1 + \frac{1}{6^s} + \frac{2}{7^s} + \frac{2}{10^s}
   + \frac{2}{15^s} + \frac{2}{22^s} + \frac{2}{25^s}
   + \frac{2}{31^s}  + \frac{2}{33^s} + \frac{2}{42^s}
   + \frac{4}{55^s}  + \ldots
\]
thus illustrating the calculation explained above.
\end{example}

The general situation for non-fundamental discriminants is
more complicated.  To work out further examples, it is advantageous to
start from an order $\CO$ and its $\SOS$-group, which only depends on
the quadratic field $K$ by Theorem~\ref{sos-group}.  Then, for each
element of this group, one has to determine the index of the
corresponding primitive SSL of $\CO$, which can be linked to the
results for the maximal order $\CO_{K}$. Defining the denominator of
$z\in \SOS (\gG)$ for a planar lattice $\gG$ as
\[
       \den^{}_{\gG} (z) = \min \{ \alpha\geqslant 1 \mid \alpha z\gG 
           \subset \gG \} \ts ,
\]
which exists by a standard discreteness argument on the basis of the
lattice property of $\gG$, one sees that $z$ gives rise to a primitive
SSL of $\gG$ of index $ \bigl(\den^{}_{\gG} (z) \bigr)^2$. Since the
latter is an integer, the denominator itself is either an integer or a
quadratic irrationality.

\begin{example}[${\BZ [3\ts\ii]}$ and ${\BZ [5\ts\ii]}$]
\label{ex:3and5}
  Let $p$ be a prime and consider $\BZ [p\ts\ii]$, which is an order
  in the field $\BQ(i)$, with conductor $p$ in the maximal order
  $\BZ[\ii]$. The case $p=2$ was treated in Example~\ref{D=-16} as a
  special case with class number $1$.  Two further primes lead to
  convenient numbers, namely $p=3$ and $p=5$. By
  Theorem~\ref{sos-group}, we have
\[
    \SOS (\BZ[n\ts\ii]) = \SOS (\BZ[\ii])
    \simeq C_{8} \times \BZ^{(\aleph^{}_{0})} ,
\]
for arbitrary $n\in\BN$, with the group and generators as
described in Example~\ref{ex:square-sos}.

To determine the SSLs of $\BZ[p\ts\ii]$, it is again sufficient to
concentrate on the primitive ones, meaning (by
Proposition~\ref{prop:ideals}) the principal ideals of $\BZ[p\ts\ii]$
that are primitive as sublattices.  They can be obtained from the
rotations of the $\SOS$-group (which does not depend on $p$) by
determining the corresponding denominators (which depend on $p$). If
$p$ is prime, the denominator of any $z\in\SOS (\BZ[\ii])$ for
$\BZ[p\ts\ii]$ is either the same as for $\BZ[\ii]$, or it gets
multiplied by $p$. Each primitive SSL of $\BZ[\ii]$, labelled by some
$z = w/\lvert w \rvert$ with $w=m+n\ts \ii$ and $m,n$ coprime, gives
rise to two distinct SSLs of $\BZ[p\ts\ii]$ whose indices might differ
by a factor of $p^2$. This follows from the different point
symmetries, because $z$ and $\ii\ts z$ define the same SSL of the
square lattice, but distinct ones for $\BZ[p\ts\ii]$.  Let us thus
consider Gaussian integers $w=m+\ii \ts n$ with $m,n$ coprime, compare
it with $\ii\ts w$, and distinguish the possible cases.

For $p=3$, a Gaussian integer $w=m + 3 n \ii $ with $3 \!\nmid\! m$
results in $\lvert w\rvert^2 \equiv 1 \bmod{3}$, while $w=m + \ii \ts
n$ with $3 \! \nmid \! n$ gives either $\lvert w\rvert^2 \equiv 1
\bmod{3}$ (when $3 \!\mid\! m$) or $\lvert w\rvert^2 \equiv 2
\bmod{3}$ (when $3\!\ts \nmid\! m$).  Of these possibilities, only
$w=m+ 3 n \ii $ leaves the index of the resulting SSL unchanged (in
comparison to the square lattice), while all other indices have to be
multiplied by $9$.  This gives the generating function
\[
     D^{\sf pr}_{\BZ[3\ts\ii]} (s) \, = \!\!
     \sum_{\substack{m\geqslant 1 \\ m\equiv 1\; (3)}} \!\!
     (1+9^s)\, \frac{f^{\sf pr}_{\square} (m)}{(9\ts m)^s} \; + \!\!
     \sum_{\substack{m\geqslant 1 \\ m\equiv 2\; (3)}} \!\!
     \frac{2\ts f^{\sf pr}_{\square} (m)}{(9\ts m)^s} 
\]
with the arithmetic function $f^{\sf pr}_{\square}$ of
Example~\ref{ex:square-tri}.  There is no meaningful Euler product
expansion, in line with the non-multiplicativity of the total
number of SSLs of a given index in this case.

Similarly, for $p=5$, one has $\lvert w\rvert^2 \equiv \pm 
1 \bmod{5}$ when $w=m + 5 n \ii $ with $5\! \nmid\! m$ or
when $w=m+\ii \ts n$ with $5 \!\mid\! m$ and $5 \!\ts\nmid\! n$, while
$\lvert w\rvert^2 \equiv 0$ or $\pm\ts 2 \bmod{5}$ when
$w=m + \ii \ts n$ with both $m$ and $n$ coprime to $5$. This time,
the generating function reads
\[
     D^{\sf pr}_{\BZ[5\ts\ii]} (s) \, = \!\!
     \sum_{\substack{m\geqslant 1 \\ m\equiv \pm 1\; (5)}} \!\!
     (1+25^s)\, \frac{f^{\sf pr}_{\square} (m)}{(25\ts m)^s} \, + \!\!
     \sum_{\substack{m\geqslant 1 \\ m\equiv 0,\pm 2\; (5)}} \!\!
     \frac{2\ts f^{\sf pr}_{\square} (m)}{(25\ts m)^s} 
\]
with an analogous interpretation as in the previous case.
\end{example}

\section{General case}

Beyond the cases described so far, one loses the possibility
to express the results via simple congruence conditions on
the rational primes. Instead, one needs a criterion for the
representability of a given prime by the norm form via a
specific polynomial congruence, as explained in \cite{Cox}.
When we are dealing with lattices that are similar to the
maximal order in an imaginary quadratic field, we may employ
the main result of Cox \cite{Cox}, as extracted from his
theorems 9.2 and 13.23. It is formulated for discriminants
of the form $-4n$, with class number $h(-4n)$. Its extension
to the remaining discriminants is mentioned in
\cite[Exs.~9.3]{Cox}.
\begin{fact}
   For $n\in \BN$, there exists an effectively computable
   polynomial $f_n (x)$ of degree $h(-4n)$ such that,
   for any odd prime $p$ not dividing $n$, the equation
   $p=x^2 + n \ts y^2$ has an integer solution if and only
   if\/ $\bigl(\frac{-n}{p}\bigr)=1$ and $f_n (x) \equiv 0$
   mod $p$ has an integer solution.

   The corresponding statement also holds for negative discriminants
   $D\equiv 1$ mod $4$, then for the representation of $p$ by the form
   $x^2 + xy + \frac{1-D}{4}\ts y^2$. Here, the conditions are
   $\bigl(\frac{m}{p}\bigr)=1$ with $m=\frac{1-D}{4}$, and the 
   polynomial has degree $h(D)$. \qed
\end{fact}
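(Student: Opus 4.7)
The strategy is to translate representability of $p$ by the form into a splitting condition in the \emph{ring class field} of the relevant order, and then to convert that splitting condition into a polynomial congruence via an explicit generator of the ring class field. I focus on the case $D = -4n$; the variant $D \equiv 1 \bmod 4$ is entirely parallel and is handled at the end.

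First I would note that $p = x^2 + n y^2$ has an integer solution iff $p$ factors in the order $\CO = \BZ[\sqrt{-n}\,]$ as $\mathfrak{p}\bar{\mathfrak{p}}$ with $\mathfrak{p}$ a \emph{principal} ideal of norm $p$. Since $p$ is odd and $p \nmid n$, the existence of any factorisation of $p$ in the maximal order $\CO_K$ with $K = \BQ(\sqrt{-n}\,)$ is equivalent to $\leg{-n}{p} = 1$ by the standard decomposition law for quadratic fields. So the remaining task is to decide, once $p$ splits in $K$, when a prime of $\CO_K$ above $p$ intersects $\CO$ in a principal ideal. This is answered by class field theory: the \emph{ring class field} $L$ of $\CO$ is the abelian extension of $K$ with $\mathrm{Gal}(L/K) \cong \mathrm{Pic}(\CO)$, a group of order $h(-4n)$; moreover $L/\BQ$ is Galois, and for primes $p$ coprime to the conductor, an $\CO_K$-prime above $p$ becomes principal upon intersection with $\CO$ iff it splits completely in $L/K$, iff $p$ splits completely in $L/\BQ$.

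To make this splitting condition effective, I would invoke the main theorem of complex multiplication: the value $j(\CO)$ of the modular $j$-function on the lattice $\CO$ is an algebraic integer, one has $L = K(j(\CO))$, and the $\BQ$-conjugates of $j(\CO)$ are precisely the values $j(\CO_i)$ with $\CO_i$ running through representatives of the ideal classes of $\CO$. Since $j(\CO)$ is real (being evaluated at a point on the imaginary axis) while $K$ is imaginary quadratic, the fields $K$ and $\BQ(j(\CO))$ meet only in $\BQ$, so the minimal polynomial $f_n(x) \in \BZ[x]$ of $j(\CO)$ over $\BQ$ has degree exactly $h(-4n)$. By Dedekind's theorem on the factorisation of primes, complete splitting of $p$ in $L/\BQ$ is equivalent, for $p$ coprime to the discriminant of $f_n$, to $f_n$ splitting into distinct linear factors mod $p$; in conjunction with $\leg{-n}{p} = 1$, this in turn reduces to the existence of a single integer root of $f_n$ mod $p$. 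The main non-trivial input, and the step I would quote rather than reprove, is the theory of complex multiplication providing the integrality of $j(\CO)$ and the identification $L = K(j(\CO))$ of the correct degree; these are exactly Cox's Theorems 9.2 and 13.23.

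For the case $D \equiv 1 \bmod 4$, one repeats the argument with the order $\CO = \BZ\bigl[\tfrac{1+\sqrt{D}}{2}\bigr]$ and its norm form $x^2 + xy + \tfrac{1-D}{4}\ts y^2$; the splitting condition in $K = \BQ(\sqrt{D}\,)$ becomes $\leg{m}{p} = 1$ with $m = \tfrac{1-D}{4}$, the ring class field of $\CO$ has degree $h(D)$ over $K$, and the polynomial $f_n$ is replaced by the minimal polynomial over $\BQ$ of $j(\CO)$ for this new order, again of degree $h(D)$.
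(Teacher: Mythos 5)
Your overall route --- representability of $p$ by the norm form $\Leftrightarrow$ existence of a principal proper $\CO$-ideal of norm $p$ $\Leftrightarrow$ complete splitting of $p$ in the ring class field $L$ of $\CO$ $\Leftrightarrow$ a congruence condition on the minimal polynomial of a real generator of $L$, with $j(\CO)$ as that generator --- is exactly the argument of Cox that the paper relies on; the paper itself gives no proof at all, but simply quotes Cox's Theorems~9.2 and~13.23 (and his Exercise~9.3 for the discriminants $D\equiv 1 \bmod 4$). Your class-field-theoretic steps, the reduction of $\bigl(\frac{-n}{p}\bigr)=1$ to splitting in $K$ (using that $p\nmid 4n$ forces $p$ coprime to the conductor), and the degree count via the reality of $j(\CO)$ are all correct.

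There is, however, a genuine gap at the final step. Your Dedekind argument is explicitly restricted to primes $p$ coprime to the discriminant of $f_n$ (more precisely, to the index $[\CO_{\BQ(j(\CO))}:\BZ[j(\CO)]]$), so what you prove is the equivalence for all \emph{but finitely many} odd primes $p\nmid n$. The Fact asserts it for \emph{every} odd prime $p\nmid n$, and the discriminant of the class polynomial does in general have odd prime divisors coprime to $n$; for such a prime, a root of $f_n$ modulo $p$ need not come from a degree-one prime of $\BQ(j(\CO))$, so the converse direction of your last equivalence breaks down. This is precisely the difference between Cox's Theorem~9.2, which carries the hypothesis $p \nmid \mathrm{disc}(f_n)$, and his Theorem~13.23, which removes that hypothesis for the specific choice $f_n = H_{-4n}$, the class equation; the removal is not formal, as Cox's proof of Theorem~13.23 rests on Deuring's results on the reduction of elliptic curves with complex multiplication, a genuinely deeper input than Dedekind's factorisation theorem. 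Relatedly, you misattribute the content of the theorems you quote: the integrality of $j(\CO)$ and the identification $L=K(j(\CO))$ constitute the First Main Theorem of complex multiplication (Chapter~11 in Cox), not Theorem~13.23, so citing ``9.2 and 13.23'' for those two facts does not supply the missing step. To close the gap you must either invoke (or reprove) the Deuring-type argument for the class equation at the primes dividing its discriminant, or else weaken the statement to exclude finitely many primes.
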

One possible choice of the polynomial is the class equation, which can
be expressed as a product over the classes and involves the
$j$-invariants of its representatives, see \cite[p.~298]{Cox} for an
example. For fundamental discriminants, there are simpler, more
efficient alternatives for the class polynomials\footnote{For
instance, see 
\texttt{http://www.exp-math.uni-essen.de/zahlentheorie/classpol/class.html}}.

Unfortunately, this approach does not easily seem to lead to closed
expressions as soon as we are beyond the situation with one class per
genus.  As in the second part of the previous chapter, it is thus
usually easier to employ the denominator of a rotation to come to
concrete results. Let us illustrate this with one final example.

\begin{example}[${\BZ[p \ii]}$ with $p$ an odd prime] 
As in Example~\ref{ex:3and5}, we have
\[
   \SOS \bigl(\BZ[p \ii]\bigr) = 
   \SOS \bigl(\BZ[\ii]\bigr) \simeq 
   C_{8} \times \BZ^{(\aleph^{}_{0})} ,
\]
and, in principle, we can proceed as above.  In particular, using the
same conventions for $\omega=m+\ii n$ as above,
$z=\frac{\omega}{\lvert\omega\rvert}$ has denominator $|\omega|$ or
$p\ts |\omega|$, depending on whether $p$ divides $n$ or not. Indeed, $z$
has denominator $p\ts |\omega|$, if $|\omega|^2=m^2+n^2$ is not a
quadratic residue modulo $p$, or if $|\omega|^2$ is divisible by $p$.
If $|\omega|^2$ is a quadratic residue, both denominators may
occur. Clearly, if $z$ has denominator $|\omega|$, then $\ii z$ has
denominator $p\ts |\omega|$, since $m$ and $n$ are relatively
prime. Hence, for fixed $|\omega|^2$, the number of primitive SSLs
with index $|\omega|^2$ is at most the number of primitive SSLs with
index $p^2\ts |\omega|^2$. Thus, in terms of the arithmetic function
$f^{\mathsf{pr}}_{\square}$ of the square lattice, we may write
\[
   D_{\BZ[p \ii]}^{\mathsf{pr}}(s) = \sum_{\leg{m}{p}=1}
    \frac{f_\square^{\mathsf{pr}}(m)}{p^{2s} m^s}
    \left(  b(m)+a(m)\ts p^{2s} \right)+
    \sum_{\leg{m}{p}\neq 1}\frac{2f_\square^{\mathsf{pr}}(m)}{p^{2s} m^s}, 
\]
where $a(m)$ and $b(m)$ are still to be determined. They satisfy
$a(m)+b(m)=2$ together with $a(m)\leq b(m)$ and
$f_\square^{\mathsf{pr}}(m)\ts a(m)\in\BN_0$ (we have seen above
that $a(m)=b(m)=1$ for $p=3$ or $p=5$).

The determination of $a(m)$ depends on the prime factorisation of $m$
and is rather tedious in general. As an example, we discuss $p=7$,
where the quadratic residues are $1$, $2$ and $4$.  Here, we have three
different types of prime numbers $q=(m+\ii n)(m-\ii n)$ (we only need
to consider primes $q\equiv 1\pmod 4$), namely
\begin{enumerate}
\item $q \equiv 1,2,4 \pmod 7$ and either $7\mid m$ or $7\mid n$
\item $q \equiv 1,2,4 \pmod 7$ and $7\nmid m$, $7\nmid n$, which implies
  $m^2\equiv n^2 \pmod 7$,
\item $q \equiv 3,5,6 \pmod 7$, which implies $7\nmid m$, $7\nmid n$,
  and $m^2\not\equiv n^2 \pmod 7$.
\end{enumerate}
Note that $a(q)=1$ in the first case, and $a(q)=0$ in the other two.
To handle composite numbers $m$, let $m=\prod_i q_i^{r_i}$ be the
prime decomposition of $m$ and define $s(m):=\sum_i t_i r_i$, where
$t_i$ is $4$, $2$ or $1$, according to whether $q_i$ is of type $1$,
$2$ or $3$, respectively. One can check that $s(m)$ is even if and
only if $m \equiv 1,2,4 \pmod 7$. Such numbers $m$ can be divided into
three equivalence classes, namely
\begin{itemize}
\item[$N_1$:] all prime factors
   $q_i \equiv 3,5,6 \pmod 7$ have even power and $s(m) \equiv 0 \pmod 4$,
\item[$N_2$:]  all prime factors
   $q_i \equiv 3,5,6 \pmod 7$ have even power and $s(m) \equiv 2 \pmod 4$,
\item[$N_3$:]  there are
   at least two prime factors $q_i \equiv 3,5,6 \pmod 7$ with odd power.
\end{itemize}
Both $a(m)$ and $b(m)$ are constant on these equivalence classes, with
values
\begin{center}
{\renewcommand{\arraystretch}{1.1}
\begin{tabular}{c|ccc}
 $m$ & $N_1$ & $N_2$ & $N_3$ \\
\hline
$a(m)$ & $1$ & $0$ & $\frac{1}{2}$ \\
$b(m)$ & $1$ & $2$ & $\frac{3}{2}$
\end{tabular}}
\end{center}
This gives the generating function
\[
\begin{split}
   D_{\BZ[7\ii]}^{\mathsf{pr}}(s)\, = & \sum_{m\in N_1}
   \frac{f_\square^{\mathsf{pr}}(m)}{49^s m^s}\left(1+49^s\right)
   \, + \! \sum_{m\in N_2}\frac{2f_\square^{\mathsf{pr}}(m)}{49^s m^s} \\
   & + \sum_{m\in N_3}\frac{f_\square^{\mathsf{pr}}(m)}
      {49^s m^s}\left(\frac{3}{2}+
   \frac{49^s}{2}\right) \, + \!\! \sum_{m\equiv 3,5,6\, (7)}
   \frac{2f_\square^{\mathsf{pr}}(m)}{49^s m^s} \\
   = &\; \frac{2}{49^s} D_{\BZ[\ii]}^{\mathsf{pr}}(s) 
     \, + \! \sum_{m\in N_1}
   \frac{f_\square^{\mathsf{pr}}(m)}{49^s m^s}\left(49^s-1\right)
   \, + \! \sum_{m\in N_3}\frac{f_\square^{\mathsf{pr}}(m)}{49^s m^s}
   \left(\frac{49^s}{2}- \frac{1}{2}\right) \\
   = &\; 1+\frac{1}{49^s}+\frac{2}{50^s}+\frac{2}{53^s}+\frac{2}{58^s}
    +\frac{2}{65^s}+\frac{2}{74^s}+\frac{2}{85^s}+\frac{2}{98^s}
    +\frac{2}{113^s}+ \frac{2}{130^s}+ \ldots
% \frac{2}{149^s}+\frac{2}{170^s}+\cdots
\end{split}
\]
which illustrates the higher complexity of this case.
\end{example}

\bigskip
\section*{Acknowledgements}

It is our pleasure to thank Robert V.\ Moody for helpful discussions
and Christian Huck for various comments on the manuscript.  This work
was supported by the German Research Council (DFG), within the CRC
701.

\bigskip

\end{document}